\numberwithin{equation}{section}
\newtheorem{theorem}{Theorem}[section] 
\newtheorem{lemma}[theorem]{Lemma}     
\newtheorem{corollary}[theorem]{Corollary}
\newtheorem{proposition}[theorem]{Proposition}
\theoremstyle{definition}
\newtheorem{definition}[theorem]{Definition}
\newtheorem{remark}[theorem]{Remark}
\newtheorem{example}[theorem]{Example}
\def\F{{\mathbb F}}
\def\NN{{\mathbb N}}
\def\A{\mathcal A}
\def\B{\mathcal B}
\def\L{\mathcal L}
\def\RR{\mathcal R}
\def\SS{\mathcal S}
\def\dim{{\rm dim}\,}
\def\R{{\mathbb R}}
\def\C{{\mathbb{C}}}
\begin{document}
\title
{Upper bounds for the length  of non-associative algebras\thanks{The work is  financially supported by the grant  RSF 17-11-01124 }}

\author{A.E. Guterman$^{1,2}$, D.K. Kudryavtsev$^{1}$}
\date{\small $^{1}$Faculty of Algebra, Department of Mathematics and 	Mechanics, 	Moscow State University,  Moscow, GSP-1, 119991, 	Russia\\\small $^{2}$Moscow Institute of Physics and Technology,  Dolgoprudny, 141701, Russia}

\maketitle

\begin{abstract}
 
We obtain a sharp upper bound for the length of arbitrary   non-associative  
algebra and present an example demonstrating the sharpness of our bound. 
To show this we introduce a new method of characteristic sequences based on linear algebra technique. This method provides an efficient tool for computing the length function in non-associative case. Then we apply the introduced method to obtain an upper bound for the length of an arbitrary locally complex algebra. We also show that the obtained bound is sharp. In the last case the length is bounded in terms of Fibonacci sequence.

{\bf Keywords}: vector spaces, dimension, length function, non-associative algebras, combinatorics of words 

MSC[2010]: 15A03, 17A99, 15A78
\end{abstract}

\section{Introduction}

In the present paper $\A$ is a unital finite dimensional not necessarily associative algebra over a field $\F$. We refer the reader to \cite{McCr,ZhevSliSheShi83} for the background on the topic.  Let $\SS=\{a_1,\ldots,a_k\}$ be a finite subset of elements of the algebra $\A$. We define the length 
function of $\SS$ as follows.

Any product of a finite number of elements of $\SS$ is a {\em word} in letters from $\SS$, or simply a word in $\SS$. The {\em length} of the word
	equals to the  number of letters in the corresponding product. We consider $1$ as a word in $\SS$ of the {\em length $0$}.

It is worth noting that different choices of brackets provide different words of the same length due to the non-associativity of $\A$.

The set of all words  in $\SS$ with lengths less  than or equal to $i$ is denoted by $S_i$, here $i\ge 0$.
 
Note that similar to the associative case, $m<n$ implies that $S_m \subseteq S_n$.
 
The set $\L_i(\SS) = \langle S_i\rangle$  is the linear span  of  the set  $S_i$
(the set of all finite linear combinations with coefficients belonging to $\mathbb{\F}$). We write $\L_i$ instead of $\L_i(\SS)$ if $\SS$ is determined from the context. It should be noted that $\L_0(\SS)=\langle
1 \rangle=\mathbb{F}$ for any $\SS$. 
The set $\L(\SS)$ stands for $\bigcup\limits_{i=0}^\infty \L_i(\SS)$. 
\begin{remark}\label{rem}
	$\SS$ is a generating set of
	$\A$ if and only if $\A=\L(\SS)$.
\end{remark}

\begin{definition}\label{sys_len} 
	The	{\em length of  a generating set} $\SS$ of a finite-dimensional algebra $\A$ is defined as follows $l(\SS)=\min\{k\in \mathbb{Z}_+:\L_k(\SS)=\A\}.$ 
\end{definition}

\begin{definition}\label{alg_len} The
	{\em length of an algebra $\A$} is   $l(\A)=\max \{l(\SS): \L(\SS)=\A\}$. 
\end{definition}

The problem of the  associative algebra length computation was first discussed in \cite{SpeR59}, \cite{SpeR60} for the algebra of $3\times3$ matrices
in the context of the mechanics of isotropic continua.
The problem
of computing the length of the full matrix algebra $M_n(\F)$ as a function of the matrix size $n$ was stated in the work~\cite{Paz84}  and is still an open problem. The known upper bounds for the length of the matrix algebra are in general nonlinear in $n$.

 The first upper bound on the length function was established in 1984 by Paz, see \cite{Paz84}.
\begin{theorem}[{\cite[Theorem~1, Remark~1]{Paz84}}] Let $\F$ be an arbitrary field. Then \[l(M_n(\F))\leq
\left\lceil \frac{n^2+2}{3}\right\rceil,\] where $\lceil . \rceil$
denotes the least integer function.
\end{theorem}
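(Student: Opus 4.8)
The plan is to rephrase the assertion $l(M_n(\F)) \le \lceil (n^2+2)/3\rceil$ as a statement about the growth of the filtration associated with a generating set. Fix a generating set $\SS$ and set $d_i = \dim \L_i(\SS)$. First I would record the two structural consequences of associativity: since every word of length $i+1$ factors as (word of length $i$)(letter), one has $\L_{i+1} = \langle \L_i \cdot \L_1\rangle$, and symmetrically $\L_{i+1}=\langle \L_1\cdot\L_i\rangle$. This gives monotonicity $\L_i\subseteq\L_{i+1}$ and, crucially, \emph{stabilization}: if $\L_{i+1}=\L_i$ then $\L_{i+2}=\langle\L_{i+1}\cdot\L_1\rangle=\langle\L_i\cdot\L_1\rangle=\L_{i+1}$, so the chain is constant from that point and equals $\L(\SS)=M_n(\F)$ by Remark~\ref{rem}. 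Hence, writing $m=l(\SS)$, the dimensions form a strictly increasing sequence $1=d_0<d_1<\cdots<d_m=n^2$, and the whole problem reduces to bounding the number $m$ of strict steps. The naive bound (each step gains at least one dimension) only yields $m\le n^2-1$, so the real content is to show the average gain per step is at least about $3$.

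Second, I would bring in the two quantitative inputs that make $3$ appear. The product inequality $\L_i\cdot\L_j\subseteq\L_{i+j}$ (again from associativity) lets me relate the growth at one scale to another, in particular $\L_{2i}=\langle\L_i\cdot\L_i\rangle$; and the Cayley--Hamilton theorem guarantees that for any matrix $X$ the powers $I,X,X^2,\dots$ span a commutative subalgebra of dimension at most $n$. The second fact is what keeps everything on the quadratic scale and, more importantly, explains why slow growth cannot continue: the only way the dimension can creep up one unit at a time is along powers of a fixed element, but such a chain saturates at dimension $\le n$ inside a proper commutative subalgebra and can never reach the noncommutative algebra $M_n(\F)$.

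The crux, and the step I expect to be the main obstacle, is the growth lemma: to prove that while $\L_i\ne M_n(\F)$ the increments $d_{i+1}-d_i$ cannot all be small, quantitatively that $d_m\ge 3m-2$, which gives the desired $m\le \lceil (n^2+2)/3\rceil$. My strategy would be an amortized one: a step-by-step lower bound on the increment is impossible since the gain can genuinely equal $1$ locally, so instead I would argue that a run of consecutive small increments forces $\L_i$ to be nearly closed under multiplication, i.e.\ to sit close to a proper subalgebra; because $\SS$ generates the full noncommutative $M_n(\F)$, the commutators $A_pA_q-A_qA_p$ of the generators must repeatedly supply genuinely new elements, so such runs are short. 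Converting ``$\SS$ generates a noncommutative algebra'' into a clean count of how many small steps can occur between large ones, and then optimizing the bookkeeping to land on the exact constant $1/3$ together with the additive $+2$, is the delicate part; this is where Paz's refinement (his Remark~1) does its work, and it is precisely the place where both associativity (for stabilization and the product inequality) and Cayley--Hamilton (to forbid slow commutative filling of $M_n(\F)$) must be used simultaneously.
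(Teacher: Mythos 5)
This statement is quoted background: the paper cites it from \cite{Paz84} and gives no proof of its own, so there is nothing internal to compare your argument against; it must stand on its own as a proof of Paz's theorem. As such, it has a genuine gap. Your first two paragraphs are fine but only establish the easy reduction that is common to all length bounds for associative algebras: $\L_{i+1}(\SS)=\langle \L_i(\SS)\cdot\L_1(\SS)\rangle$, hence the chain stabilizes as soon as one inclusion is an equality, hence $1=d_0<d_1<\cdots<d_m=n^2$ with $m=l(\SS)$. The naive consequence is only $m\le n^2-1$. Everything that makes the theorem true is concentrated in your ``growth lemma'' $d_m\ge 3m-2$, and you do not prove it: you explicitly label it the main obstacle and offer only the heuristic that a run of increments equal to $1$ or $2$ forces $\L_i(\SS)$ to be ``nearly closed under multiplication'' and that commutators of the generators must then inject new elements. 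Neither clause is made precise, no quantitative statement is extracted from either, and no mechanism is given that converts ``$M_n(\F)$ is noncommutative'' into the specific constant $3$ with the additive correction $-2$. The single concrete use you make of the Cayley--Hamilton theorem (powers of one matrix span a commutative subalgebra of dimension at most $n$) rules out the degenerate scenario of a purely commutative filling, but it does not bound the length of a run of small increments, which is what the amortized argument would need.

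I would also caution that the intended repair is unlikely to go through along the commutator route you sketch. Paz's actual argument does not bound runs of small increments by appealing to noncommutativity; its key lemma is of the opposite flavour, namely that a single sufficiently small increment forces the filtration to terminate almost immediately (if $\dim\L_{k+1}(\SS)\le\dim\L_k(\SS)+1$ then $\L_{k+1}(\SS)=\L(\SS)$), combined with a finer analysis, via Cayley--Hamilton applied to elements of $\L_k(\SS)$, of how new words of length $k+1$ can arise from $\L_k(\SS)$. So the gap is not a matter of routine bookkeeping left to the reader: the unproved inequality $d_m\ge 3m-2$ \emph{is} the theorem, and the proposal as written does not contain a proof of it.
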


An (asymptotic) improvement of this bound was obtained in~\cite{Pap97}.
More precisely, Pappacena in 1997
provided an upper bound for the length of any finite dimensional associative algebra $\RR$ as a function of two its invariants: the dimension and
$m(\RR)$, which is the maximal degree of the minimal polynomials for the elements
of the algebra.

\begin{theorem}[{\rm \cite[Theorem 3.1]{Pap97}}]\label{P:bound0}
Let $\F$ be an arbitrary field, $\RR$ be an associative $\F$-algebra, and let
\[f(d,m)=m\sqrt{\frac{2d}{m-1}+\frac{1}{4}}+\frac{m}{2}-2.\] Then
$ l(\RR)< f(\dim \RR,m(\RR)).$ \end{theorem}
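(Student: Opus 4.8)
The plan is to bound $l(\RR)$ by analyzing, for an arbitrary generating set $\SS$, the filtration $\F = \L_0 \subseteq \L_1 \subseteq \L_2 \subseteq \cdots$ of $\RR$ by spans of short words. First I would record the structural fact that makes the associative case tractable: since $\L_{i+1}$ is spanned by products of $\L_i$ with the generators, once $\L_i = \L_{i+1}$ the filtration is \emph{stationary}, so the dimensions strictly increase, $1 = \dim\L_0 < \dim\L_1 < \cdots < \dim\L_{l(\SS)} = \dim\RR$, until they reach $\RR$. This already yields the trivial bound $l(\SS) \le \dim\RR - 1$ and, more usefully, reduces the whole problem to controlling the increments $N_i := \dim\L_i - \dim\L_{i-1} \ge 1$. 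Since the theorem is a maximum over generating sets, I may assume $\SS$ is chosen so that $\{1, a_1, \dots, a_k\}$ is linearly independent, which keeps the number of generators under control ($k < \dim\RR$).

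Next I would bring in the invariant $m = m(\RR)$ in two ways. The soft use is a reduction rule: every element $b \in \RR$, and in particular every word $u$ in $\SS$, satisfies a monic relation of degree at most $m$, so any word containing a block $u^m$ can be rewritten as a linear combination of strictly shorter words; hence $\L_n$ is spanned by the \emph{normal} words of length at most $n$ that contain no $m$-th power of a subword. The quantitative heart, however, must be a lower bound on $\dim\RR$ in terms of $l(\SS)$: a long filtration should force a large dimension, concretely an estimate of the shape
\[
\dim\RR \;\ge\; \frac{(m-1)\,(l(\SS)+2)\,(l(\SS)+2-m)}{2m^2},
\]
which is exactly the inverse of the relation defining $f$ (solving $l = f(d,m)$ for $d$ returns this expression). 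Granting it, the conclusion $l(\RR) < f(\dim\RR, m)$ is pure algebra: one treats the displayed inequality as a quadratic in $l(\SS)$ and isolates $l(\SS)$, the $+2$ shifts and the $-2$ in $f$ tracking the passage from $\L_0$ to $\L_{l(\SS)}$, while the strictness of the conclusion comes from the proper inclusion $\L_{l(\SS)-1} \subsetneq \L_{l(\SS)} = \RR$.

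The mechanism behind the dimension bound should be that the increments $N_i$ cannot stay small for long. Roughly, if $N_i$ stalls — only a bounded number of genuinely new products appearing at consecutive lengths — then the newly generated directions satisfy low-degree relations in the multiplication, and collecting these across the filtration would produce a word whose minimal polynomial has degree exceeding $m$, contradicting $m(\RR) = m$. I would therefore try to show that the increments grow at least linearly, with slope on the order of $1/(m-1)$, once the length passes $m$; summing such increments over $i = 1, \dots, l(\SS)$ delivers the quadratic lower bound above. The regime to keep honest is the small-length one (for instance a single generator, where $N_i \equiv 1$ and $l(\SS) = m-1$): there the factor $(l(\SS)+2-m)$ is tiny, so the estimate degenerates gracefully and no contradiction is claimed.

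The step I expect to be the main obstacle is precisely this combinatorial lower bound on the growth of $\dim\L_i$. In the noncommutative setting one cannot sort or commute letters, so the degree-$m$ reductions interact delicately with the surrounding word, and upgrading the qualitative statement ``increments must grow'' to the exact slope needed to recover the constant in $f$ requires a careful accounting of how many linearly independent products of each length can coexist under the constraint $m(\RR) = m$. Everything upstream (the filtration, the stationarity fact, the power-reduction rule) and downstream (inverting the quadratic) is routine by comparison.
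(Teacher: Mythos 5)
This statement is quoted in the paper from Pappacena's article \cite{Pap97} and is not proved there, so there is no internal proof to compare against; your proposal has to stand on its own, and as it stands it has a genuine gap. The framing is sound: the filtration $\L_0\subseteq\L_1\subseteq\cdots$, its stationarity in the associative case, the reduction to a single (linearly independent) generating set, and the algebraic inversion showing that the theorem is equivalent to the lower bound $\dim\RR>\frac{(m-1)(l+2)(l+2-m)}{2m^2}$ are all correct and are indeed the outer shell of Pappacena's argument. But that inversion is elementary bookkeeping; the entire mathematical content of the theorem is the quantitative lemma forcing the increments $N_i=\dim\L_i-\dim\L_{i-1}$ to grow, and at that point you substitute a heuristic (``stalling increments would yield a word whose minimal polynomial has degree exceeding $m$'') for a proof, and you say yourself that this is ``the main obstacle.'' A proposal that identifies the key lemma but does not prove it is not a proof.

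Two concrete reasons the heuristic cannot be accepted as written. First, the mechanism is not pinned down: it is not true in general that increments are monotone or that a single small increment immediately produces an element of large degree, so one needs a precise statement of the form ``if $\dim\L_i/\L_{i-1}\le k$ then the filtration stabilizes within roughly $km$ further steps,'' proved via the degree-$\le m$ relations satisfied by words, and your sketch does not supply such a statement or its proof. Second, the constants do not match: to obtain $\dim\RR\gtrsim\frac{(m-1)}{2m^2}\,l^2$ by summing increments one needs average slope about $(m-1)/m^2$, whereas you posit slope ``on the order of $1/(m-1)$''; for small $m$ (say $m=2$) these differ by a factor of $4$, and since the whole point of the theorem is the exact constant in $f(d,m)$, ``on the order of'' is not enough. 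Until the growth lemma is stated precisely and proved with the correct constant, the argument does not establish $l(\RR)<f(\dim\RR,m(\RR))$.
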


For the matrix algebra the theorem above  provides a bound with asymptotic behavior
$\displaystyle O(n^{3/2})$.

These bounds are not sharp. However, there are sharp bounds  on the length function, which are established for certain classes of associative algebras. For example, the length of commutative matrix subalgebras of size $n$ is  bounded by~$(n-1)$, see~\cite{GutM09,Mar09SM}, where this bound was proved and in particular it was shown that the bound $(n-1)$ can be achieved on the algebra of diagonal matrices over an infinite field. 
In the recent paper \cite{GutK16} we evaluate the length function for quaternion and octonion algebras. 

In this paper we obtain an upper bound for the length of arbitrary non-associative 
algebra and provide an example demonstrating that our bound on the 
length is sharp. Namely, let $\A$ be an  $\F$-algebra,  $\dim \A = n>2$. We show that $l( \A)\le 2^{n-2}$ and provide an example of an $n$-dimensional algebra of the length exactly~$2^{n-2}$.

To show this we introduce a new method to compute the length. We call it the method of characteristic sequences. This method is based on linear algebraic technique and provides an efficient tool for computing the length function in non-associative case. Then in addition we apply our method to obtain   a sharp upper bound for the lengths of  locally-complex algebras. Here Fibonacci sequence $F_n=(0,1,1,2,3,5,8,\ldots)$ appears. Namely, we show that if $\B$ is an $n$-dimensional locally complex $\F$-algebra, then $l(\B)\le F_{n-1}$. Moreover, we demonstrate that for each $n$ there exists $n$-dimensional locally complex $\F$-algebra of the length exactly~$F_{n-1}$. This method as well as most of the results in the present paper is based on basic concepts from linear algebra such as linear span and properties of linear subspaces or their dimensions.

Observe that in associative case there are different attempts to find general methods to compute lengths of algebras and generating sets. These methods depend on the structure of algebra. Namely different methods are developed for matrix algebras containing some matrices of a special structure, group algebras of abelian or non-abelian groups, incidence algebras, see respectively \cite{GutLMS18,GutMC19,GM19,KM19}. However, nothing similar to the method of characteristic sequences was known.

We note that characteristic  sequences belong to the class of integer sequences named additive chains. These sequences are known since ancient times and had several reincarnations. The detailed and self-contained survey of this  theory still containing lots of open problems can be found in \cite[Chapter 4.6.3]{Knut}. 

In the subsequent paper \cite{GutK19b} we characterize all integer sequences that may serve as characteristic sequences for some non-associative algebras as well as characteristic sequences for locally complex algebras.

Our paper is organized as follows. In Section 2 we discuss some very general properties of the length function and differences between associative and non-associative cases. In Section 3 we introduce the characteristic sequence of a generating set of an algebra and investigate its general properties. Section 4 is devoted to establishing the upper bounds for the lengths of non-associative algebras. Section 5 reminds some basic properties of locally-complex algebras and adopts for them general results from Section 2. In Section 6 we use characteristic sequence to find the upper bounds for the lengths of locally-complex algebras and to prove its sharpness. 

\section{Properties of the length in non-associative case}

\begin{lemma}
	Suppose   $m,n \in \mathbb{N}$  are given such that $m<n$. Then the following statements are equivalent:
	\begin {enumerate}
	\item $\L_n(\SS)=\L_m(\SS)$,
	\item $\dim \L_n(\SS)= \dim \L_m(\SS)$.
	\end {enumerate}
\end{lemma}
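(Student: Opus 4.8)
The plan is to observe that the inequality $m<n$ already forces one containment, so the whole statement collapses to the standard fact that a finite-dimensional vector space cannot properly contain a subspace of the same dimension. Concretely, since $m<n$ gives $S_m \subseteq S_n$ (as recorded just before Definition~\ref{sys_len}), taking linear spans yields
\[
\L_m(\SS)=\langle S_m\rangle \subseteq \langle S_n\rangle = \L_n(\SS).
\]
Both spaces are finite-dimensional, being subspaces of the finite-dimensional algebra $\A$, so the containment is a containment of finite-dimensional vector spaces.

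For the implication $(1)\Rightarrow(2)$ there is nothing to do: equality of the two subspaces as sets trivially gives equality of their dimensions. The content therefore lies entirely in $(2)\Rightarrow(1)$. Here I would argue by contradiction. Suppose the inclusion were proper, $\L_m(\SS)\subsetneq \L_n(\SS)$. Then I could pick a basis of $\L_m(\SS)$ and adjoin to it any vector lying in $\L_n(\SS)\setminus \L_m(\SS)$; this produces a linearly independent family inside $\L_n(\SS)$ of cardinality $\dim \L_m(\SS)+1$, whence $\dim \L_n(\SS)>\dim \L_m(\SS)$, contradicting the hypothesis in $(2)$. Consequently the inclusion must in fact be an equality, which is exactly $(1)$.

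Since $\A$ is finite-dimensional by standing assumption, there is no obstruction coming from infinite dimension, and the whole argument is a direct application of elementary linear algebra that makes no use of the multiplicative structure beyond the monotonicity $S_m\subseteq S_n$. I therefore expect no serious obstacle. The only point genuinely requiring attention is the preliminary containment $\L_m(\SS)\subseteq\L_n(\SS)$: without it the equivalence would be false, but it is guaranteed precisely by the hypothesis $m<n$, so the dimension-counting step is the single crux of an otherwise short proof.
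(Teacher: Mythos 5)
Your argument is correct and follows essentially the same route as the paper: the paper's proof likewise reduces everything to the containment $\L_m(\SS)\subseteq\L_n(\SS)$ (coming from $S_m\subseteq S_n$) together with the elementary fact that a finite-dimensional subspace equals any subspace of the same dimension containing it. You have merely written out in full the dimension-counting step that the paper leaves implicit.
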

\begin{proof}
	The statement follows directly from the fact that $\L_k(\SS)$ is a linear subspace of $\A$ and $\L_m(\SS) \subseteq \L_n(\SS)$ for $m<n$.
\end{proof}

\begin{lemma}\label{lem_sub}
 Let $\A$  be an algebra and $\SS_0$ and $\SS_1$  be its finite subsets such that $\L_1(\SS_0) \subseteq \L_1(\SS_1)$. Then $\L_k(\SS_0)   \subseteq \L_k(\SS_1)$ for every positive integer $k$.
\end{lemma}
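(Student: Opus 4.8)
The plan is to argue by induction on $k$, exploiting the fact that in $\A$ every word of length at least two factors, through the single binary multiplication, as a product of two strictly shorter words. The base case $k=1$ is precisely the hypothesis $\L_1(\SS_0) \subseteq \L_1(\SS_1)$; I would also record the trivial equality $\L_0(\SS_0) = \F = \L_0(\SS_1)$, which will be needed as a degenerate case below.

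For the inductive step I assume $\L_j(\SS_0) \subseteq \L_j(\SS_1)$ for every $j \le k$. Strong induction is essential here, since I must keep track of the \emph{exact} lengths of subwords, not merely the bound $k+1$. As $\L_{k+1}(\SS_0)$ is spanned by the words in $\SS_0$ of length at most $k+1$, it suffices to show that each such word $w$ lies in $\L_{k+1}(\SS_1)$. If $|w| \le k$, then $w \in \L_k(\SS_0) \subseteq \L_k(\SS_1) \subseteq \L_{k+1}(\SS_1)$ by the inductive hypothesis together with the monotonicity noted in Section~2.

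The substantive case is $|w| = k+1 \ge 2$. Here the outermost bracketing writes $w = u \cdot v$, where $u$ and $v$ are words in $\SS_0$ with $|u| = a \ge 1$, $|v| = b \ge 1$ and $a+b = k+1$; in particular $a, b \le k$. By the inductive hypothesis $u \in \L_a(\SS_0) \subseteq \L_a(\SS_1)$ and $v \in \L_b(\SS_0) \subseteq \L_b(\SS_1)$, so I may write $u = \sum_i \lambda_i p_i$ and $v = \sum_j \mu_j q_j$ with $\lambda_i, \mu_j \in \F$, where each $p_i$ is a word in $\SS_1$ of length at most $a$ and each $q_j$ is a word in $\SS_1$ of length at most $b$. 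Expanding the single product bilinearly gives $w = \sum_{i,j} \lambda_i \mu_j\,(p_i \cdot q_j)$, and it remains to check that each $p_i \cdot q_j$ lies in $\L_{k+1}(\SS_1)$.

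This length bookkeeping, together with the appearance of the identity, is the main (though mild) obstacle. If both $p_i$ and $q_j$ have positive length, then $p_i \cdot q_j$ is a genuine word in $\SS_1$ of length $|p_i| + |q_j| \le a + b = k+1$. If one factor is the empty word $1$ — which can occur, since the spans contain $\L_0 = \F$ — then by unitality the product collapses to the other factor, a word in $\SS_1$ of length at most $k \le k+1$. In every case $p_i \cdot q_j$ is a word in $\SS_1$ of length at most $k+1$, hence an element of $\L_{k+1}(\SS_1)$. Consequently $w \in \L_{k+1}(\SS_1)$; every generating word of $\L_{k+1}(\SS_0)$ lies in $\L_{k+1}(\SS_1)$, and taking linear spans completes the induction.
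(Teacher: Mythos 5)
Your proof is correct and follows essentially the same route as the paper: strong induction on $k$, decomposing each word of length $k+1\ge 2$ as a product of two strictly shorter words and using bilinearity of the product. The paper packages this as the identity $\L_k(\SS)=\langle \bigcup_{i=1}^{k-1}\L_i(\SS)\cdot\L_{k-i}(\SS)\rangle$, whereas you unfold the same computation at the level of individual words, with somewhat more careful bookkeeping about the empty word and unitality.
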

\begin{proof}
	We will prove this statement by induction on $k$.
	
	The base: for $k=1$ the statement is given.
	
	The step: 	Directly from definitions we get $\L_k(\SS)=\langle \bigcup\limits_{i=1}^{k-1}\L_i(\SS)  \cdot \L_{k-i}(\SS)\rangle$ for a generating set. Let us assume that for every $k=1,\ldots,n-1$,  $\L_k(\SS_0) \subseteq \L_k(\SS_1)$. Then $\L_n(\SS_0)=\langle \bigcup\limits_{i=1}^{n-1}\L_i(\SS_0) \cdot \L_{k-i}(\SS_0) \rangle \subseteq \langle \bigcup\limits_{i=1}^{n-1}\L_i(\SS_1) \cdot \L_{k-i}(\SS_1) \rangle = \L_n(\SS_1)$.
\end{proof}

\begin{corollary}\label{lem_halfmix}
	If $\A$ is an algebra, $\SS_0$ and $\SS_1$ are generating sets  such that $\L_1(\SS_0) \subset \L_1(\SS_1)$, then $l(\SS_0) \ge l(\SS_1)$.
\end{corollary}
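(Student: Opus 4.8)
The plan is to combine the inclusion-propagation of Lemma~\ref{lem_sub} with the characterization of length as the minimal stabilization index from Definition~\ref{sys_len}. The entire argument is a monotonicity observation: a ``larger'' degree-one span can only reach $\A$ sooner, so it has the smaller length.

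First I would note that the hypothesis $\L_1(\SS_0)\subset\L_1(\SS_1)$ implies in particular $\L_1(\SS_0)\subseteq\L_1(\SS_1)$, which is exactly the input required by Lemma~\ref{lem_sub}. Invoking that lemma yields $\L_k(\SS_0)\subseteq\L_k(\SS_1)$ for every positive integer $k$. Strictness of the inclusion at level $1$ plays no role here, since the conclusion of the corollary is a non-strict inequality.

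Next I would set $m:=l(\SS_0)$. Because $\SS_0$ is a generating set, $m$ is well defined and finite, and by Definition~\ref{sys_len} we have $\L_m(\SS_0)=\A$. Applying the inclusion obtained above at the single index $k=m$ gives the chain $\A=\L_m(\SS_0)\subseteq\L_m(\SS_1)\subseteq\A$, which forces $\L_m(\SS_1)=\A$. Since $l(\SS_1)$ is by definition the least $k$ with $\L_k(\SS_1)=\A$, this equality shows $l(\SS_1)\le m=l(\SS_0)$, the desired claim.

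I do not expect a genuine obstacle; this is essentially a one-line consequence of the preceding lemma. The only points deserving a moment of care are that Lemma~\ref{lem_sub} supplies the inclusion at exactly the index $m$ (and not merely asymptotically), and that both $l(\SS_0)$ and $l(\SS_1)$ are finite, so that the minima in Definition~\ref{sys_len} are taken over non-empty sets—both facts follow at once from $\SS_0$ and $\SS_1$ being generating sets of the finite-dimensional algebra $\A$.
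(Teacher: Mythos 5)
Your proof is correct and takes essentially the same route as the paper: both arguments are immediate applications of Lemma~\ref{lem_sub} together with Definition~\ref{sys_len}. The only cosmetic difference is that you evaluate the inclusion at the index $l(\SS_0)$ to conclude $\L_{l(\SS_0)}(\SS_1)=\A$, whereas the paper evaluates it at $l(\SS_1)-1$ to conclude $\L_{l(\SS_1)-1}(\SS_0)\neq\A$; these are two sides of the same monotonicity observation.
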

\begin{proof}
	By using the result above, $\L_{l(\SS_1)-1}(\SS_0) \subset \L_{l(\SS_1)-1}(\SS_1)\neq \A$, hence $l(\SS_0) \ge l(\SS_1)$.
\end{proof}

\begin{lemma}
	If $\A$ is an algebra and $\SS_0$ and $\SS_1$ are its finite subsets such that $\L_1(\SS_0)=\L_1(\SS_1)$, then $\L_k(\SS_0)= \L_k(\SS_1)$ for every natural $k$.
\end{lemma}
\begin{proof}
	Follows from Lemma \ref{lem_sub} by applying it twice.
\end{proof}

\begin{corollary}\label{lem_mix}
If $\A$ is an algebra, $\SS_0$ is its  generating set and $\SS_1$ is a finite subset such that $\L_1(\SS_0)=\L_1(\SS_1)$, then $\SS_1$ generates $\A$ and $l(\SS_0)=l(\SS_1)$.
\end{corollary}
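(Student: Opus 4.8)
The plan is to derive everything from the immediately preceding lemma, which guarantees that $\L_k(\SS_0)=\L_k(\SS_1)$ for every natural $k$ as soon as we know $\L_1(\SS_0)=\L_1(\SS_1)$. Since its hypothesis is exactly what we are given here, I would invoke it directly and obtain the full chain of equalities $\L_k(\SS_0)=\L_k(\SS_1)$ for all $k\ge 1$ at no cost; this single fact is the engine behind both conclusions, so the corollary is essentially a packaging of the lemma.

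To show that $\SS_1$ generates $\A$, I would appeal to Remark \ref{rem}, by which it suffices to verify $\A=\L(\SS_1)$. Because $\SS_0$ is a generating set we have $\A=\L(\SS_0)=\bigcup_{i\ge 0}\L_i(\SS_0)$. Comparing this union termwise with $\L(\SS_1)=\bigcup_{i\ge 0}\L_i(\SS_1)$, the terms with $i\ge 1$ agree by the lemma, and the zeroth terms agree because $\L_0(\SS_0)=\langle 1\rangle=\F=\L_0(\SS_1)$. Hence $\L(\SS_0)=\L(\SS_1)$, so $\A=\L(\SS_1)$ and $\SS_1$ is a generating set.

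For the equality of lengths I would unwind Definition \ref{sys_len}. Since $\L_k(\SS_0)=\L_k(\SS_1)$ for every $k$, the condition $\L_k(\SS_0)=\A$ holds precisely when $\L_k(\SS_1)=\A$, so the two sets $\{k:\L_k(\SS_0)=\A\}$ and $\{k:\L_k(\SS_1)=\A\}$ coincide. Taking the minimum of each set then yields $l(\SS_0)=l(\SS_1)$.

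I do not anticipate a genuine obstacle; the proof is a short bookkeeping argument on top of the lemma. The only point requiring a little care is the $k=0$ case when comparing the total spans $\L(\SS_0)$ and $\L(\SS_1)$, since the preceding lemma is stated only for natural (i.e.\ $k\ge 1$) exponents. This is dispatched by the observation, recorded in the introduction, that $\L_0(\SS)=\langle 1\rangle=\F$ for \emph{every} subset $\SS$, so the zeroth terms match automatically and the term-by-term comparison of the unions goes through.
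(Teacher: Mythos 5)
Your proof is correct and follows essentially the same route as the paper: both arguments rest entirely on the preceding lemma giving $\L_k(\SS_0)=\L_k(\SS_1)$ for all $k$, from which generation and equality of lengths follow immediately. The paper phrases the conclusion by checking the two specific indices $l(\SS_0)-1$ and $l(\SS_0)$, while you compare the full unions and the sets over which the minimum in Definition~\ref{sys_len} is taken; this is only a difference in bookkeeping, not in substance.
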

\begin{proof}
	By using the result above, we get: $\L_{l(\SS_0)-1}(\SS_1)=\L_{l(\SS_0)-1}(\SS_0)\neq \A$, while $\L_{l(\SS_0)}(\SS_1)=\L_{l(\SS_0)}(\SS_0)= \A$, which means that $\SS_1$ is a generating set of $\A$ and its length is equal to $l(\SS_0)$.
\end{proof}

\begin{remark}\label{rem2} Note that unlike the associative case the equality $\dim \L_n(\SS)= \dim \L_{n+1}(\SS)$ for some $n\in \mathbb{N}$ may not imply that   $\dim \L_n(\SS)= \dim \L_m(\SS)$ for all $m\ge n$ as the following example shows. 
\end{remark}

\begin{example} \label{ex1} Let $\A $ be generated by $
	1, e_1, e_2, e_3$ with the multiplication rules $  e_1^2=e_2, \, e_2^2=e_3, \, e_3^2=0$, $e_ie_j=0$ for all pairs $i,j\in \{1,2,3\}, i\ne j$. Then $\SS=\{e_1\}$ is a generating system and we have $\L_1=\langle 1,e_1\rangle$,  $\L_2=\L_3=\langle 1,e_1,e_2\rangle$, and $\L_4=\langle 1,e_1,e_2,e_3\rangle=\A$, so $\dim \L_4>\dim L_3=\dim \L_2$.
\end{example}

However, the following proposition, which belongs to folklore, is true for any non-associative algebra.

\begin{proposition}\label{pr_1}
	Let us consider a finite subset $\SS$ of $\A$ and integer $n \geq 1$. If \[\dim \L_{n}(\SS) = \dim \L_{n+1}(\SS) = \ldots = \dim \L_{2n}(\SS),\] then for all $t \in \mathbb{N}$ it holds  that $\dim \L_{n}(\SS) = \dim \L_{n+t}(\SS)$.
\end{proposition}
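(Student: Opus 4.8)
The plan is first to upgrade the dimension hypothesis into an equality of subspaces. By the first Lemma of this section (equal dimensions of nested subspaces force the subspaces to coincide), the chain $\L_n(\SS) \subseteq \L_{n+1}(\SS) \subseteq \cdots \subseteq \L_{2n}(\SS)$ with constant dimension collapses to
\[
\L_n(\SS) = \L_{n+1}(\SS) = \cdots = \L_{2n}(\SS) =: V .
\]
Since the conclusion $\dim \L_n = \dim \L_{n+t}$ for all $t$ is equivalent (again by that lemma, and by monotonicity) to $\L_m(\SS) = V$ for every $m \ge n$, it suffices to prove the latter. I would argue by strong induction on $m$, the cases $n \le m \le 2n$ being exactly the collapsed hypothesis above.

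For the inductive step fix $m \ge 2n+1$ and assume $\L_j(\SS) = V$ for all $n \le j \le m-1$. The engine is the recursion $\L_m(\SS) = \langle \bigcup_{i=1}^{m-1} \L_i(\SS) \cdot \L_{m-i}(\SS)\rangle$ recorded in the proof of Lemma \ref{lem_sub} (which holds for an arbitrary finite subset of a unital algebra), together with the elementary facts that $\L_a(\SS) \cdot \L_b(\SS) \subseteq \L_{a+b}(\SS)$ and that the spaces are monotone in the index. I would show that each summand $\L_i(\SS) \cdot \L_{m-i}(\SS)$ is contained in $V$; combined with $V = \L_{2n}(\SS) \subseteq \L_m(\SS)$ (monotonicity, as $m \ge 2n$), this forces $\L_m(\SS) = V$ and closes the induction.

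To bound a single summand, note that since $m > 2n$ at least one of the indices $i,\, m-i$ is $\ge n$; say $i \ge n$ (the case where only $m-i \ge n$ is symmetric after reducing the right factor). Then the inductive hypothesis gives $\L_i(\SS) = V = \L_n(\SS)$, so $\L_i(\SS)\cdot\L_{m-i}(\SS) = \L_n(\SS)\cdot\L_{m-i}(\SS) \subseteq \L_{n+(m-i)}(\SS)$, where $n + (m-i) \le m$ because $i \ge n$. When the inequality is strict, i.e. $i > n$, the index $n+(m-i)$ lands in $[n,m-1]$ and the inductive hypothesis finishes this summand immediately. The one delicate point is the boundary case $i = n$ (equivalently $m - i = n$): here the naive estimate only yields $\L_n(\SS)\cdot\L_{m-n}(\SS) \subseteq \L_m(\SS)$ and is circular. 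This is the main obstacle, and it is resolved exactly by the reach of the hypothesis: since $m - n > n$, the inductive hypothesis gives $\L_{m-n}(\SS) = V = \L_n(\SS)$, whence
\[
\L_n(\SS)\cdot\L_{m-n}(\SS) = \L_n(\SS)\cdot\L_n(\SS) \subseteq \L_{2n}(\SS) = V .
\]
It is precisely here that the assumption running all the way up to $\dim \L_{2n}(\SS)$ — rather than stopping earlier, as in Example \ref{ex1} where stabilization of $\dim \L_2 = \dim \L_3$ does \emph{not} persist — becomes indispensable.
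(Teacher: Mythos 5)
Your proof is correct and follows essentially the same route as the paper's: a strong induction that decomposes everything of length greater than $2n$ into products of shorter pieces, reduces the factors into $\L_n(\SS)$ via the inductive hypothesis, and closes the loop with $\L_n(\SS)\cdot\L_n(\SS)\subseteq\L_{2n}(\SS)=\L_n(\SS)$. The paper phrases this at the level of individual words (both factors of a long word already lie in $\L_n(\SS)$), whereas you track the subspace products $\L_i(\SS)\cdot\L_{m-i}(\SS)$ index by index, but the key step --- and the reason the hypothesis must reach all the way to $2n$ --- is identical.
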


\begin{proof}
	We prove this statement using the  induction on $t$. The base is  $t \leq n$. In this case the assertion holds   by the conditions.
	
	Suppose we have proven the statement for all $t \leq n+k$, $k\ge 0$. Let us show that it is satisfied for $t=n+k+1$. 
	
	If $s$ is a word of the  length $n+k+1$, then it can be represented as a product of two words of  smaller non-zero lengths, $s=(s_1)(s_2)$. Both  these words are elements of $\L_{n}(\SS)$. Indeed,  if the length of a word is less than or equal to $n$, then it is an element of  $\L_{n}(\SS)$ by the definition of $\L_n(\SS)$. If the length of a word is greater than $n$, but strictly  less than $n+k+1$, then the inclusion follows from the induction hypothesis. In any case $s_1,s_2\in \L_n(\SS)$.   Hence, \[s\in \L_{n}(\SS)\cdot \L_{n}(\SS)\subseteq \L_{2n}(\SS)=\L_{n}(\SS).\] This implies $\L_{n+k+1}(\SS)=\L_{n}(\SS)$, because the linear space $\L_{n+k+1}(\SS)$  is generated by all words of length less than or equal to~$n+k+1$, which concludes the induction proof.
\end{proof}

\begin{remark}
Example \ref{ex1} above shows also that the assumption $\dim \L_{n}(\SS) = \dim \L_{n+1}(\SS) = \ldots = \dim \L_{2n}(\SS)$ is indispensable in the case $n=2$.  For bigger $n$ we can extend this example in the following way in order to show that it is impossible to shorten the chain of equalities in the conditions of Proposition \ref{pr_1} 
even by 1.
\end{remark}

\begin{example} \label{ex2} Let $\A $ be generated by $
1, e_1, e_2, \ldots, e_n,e_{n+1}$ with the multiplication rules \[  e_1^2=e_2, \, e_1 e_{i}=e_{i+1},\ e_ie_1=0,\  i=2,\ldots , n-1, \ e_n^2=e_{n+1},\]  \[e_{n+1}^2=0, \ e_ie_j=0 \mbox{ for all pairs } i,j\in \{2,\ldots,n+1\}, i\ne j.\] Then $\SS=\{e_1\}$ is a generating system and we have \[\L_1(\SS)=\langle 1,e_1\rangle, \  \L_2(\SS)=\langle 1,e_1,e_2\rangle, \ \ldots, \ \L_n(\SS)=\langle 1,e_1,\ldots, e_n\rangle,\] further, $\L_{2n-1}(\SS)=\L_{2n-2}(\SS)=\ldots = \L_n(\SS)$, but \[\L_{2n}(\SS)=\langle 1,e_1,\ldots, e_{n+1}\rangle\ne \L_n(\SS).\]
\end{example}

For an arbitrary not necessarily associative algebra of dimension $n$ over $\F$ we can achieve a sharp bound of length, as will be proven below. The key element of the proof of this bound is the concept of a {\em fresh word}.

\begin{definition}
	A word $w$ of the length $n$ from generating set $\SS$ of algebra $\A$ is a {\em fresh} word, if for all integer $m,\ 0 \leq m<n,$ it holds  that $w \notin L_m(\SS)$.
\end{definition}

\begin{lemma}\label{lem_1}
	A fresh word of the  length greater than 1 is a product of two fresh words of non-zero lengths.
\end{lemma}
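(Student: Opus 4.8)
The plan is to take the outermost bracketing of $w$ and argue that neither of the two resulting factors can fail to be fresh. Since $w$ has length $n>1$, by the very way words are built it is a product $w=u\cdot v$ of two words $u,v$ whose lengths $a,b$ satisfy $a,b\ge 1$ and $a+b=n$; these are the two subwords joined by the last-performed multiplication. It then remains to show that both $u$ and $v$ are fresh, and the natural route is by contradiction.

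Suppose $u$ is not fresh. By the definition of a fresh word this means $u\in\L_m(\SS)$ for some $m$ with $0\le m<a$, so $u$ can be written as a finite linear combination $u=\sum_i c_i s_i$ in which each $s_i$ is a word of length at most $m$. Now I would use the bilinearity of the multiplication in $\A$ — crucially, associativity is not needed here — to distribute on the right by $v$, obtaining $w=u\cdot v=\sum_i c_i\,(s_i\cdot v)$. Each $s_i\cdot v$ is again a single word, of length at most $m+b$, and since $m<a$ we get $m+b<a+b=n$. Hence every $s_i\cdot v$ lies in $\L_{n-1}(\SS)$, and so does their linear combination $w$. This contradicts the freshness of $w$, since $n-1<n$. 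The symmetric argument, distributing $u$ on the left, shows that $v$ is fresh as well.

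Combining these, $w=u\cdot v$ is a product of two fresh words $u,v$ of nonzero lengths $a,b\ge 1$, which is exactly the assertion. The whole argument rests on two elementary facts: that multiplication is bilinear, so a linear combination may be pulled through a product, and that the length of a product word is the sum of the lengths of its factors.

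I do not expect a genuine obstacle. The only point requiring care is the bookkeeping of lengths: one must record that expressing $u$ inside $\L_m(\SS)$ with $m$ \emph{strictly} below its own length $a$ is precisely what forces each product $s_i\cdot v$ to have length strictly below $n$. This is where the strict inequality in the definition of a fresh word enters, and getting this accounting right is the crux of an otherwise direct deduction from bilinearity.
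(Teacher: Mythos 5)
Your proof is correct and follows essentially the same route as the paper's: decompose $w$ into the two factors of its outermost multiplication and show by contraposition that if either factor lies in some $\L_m(\SS)$ with $m$ strictly below its length, then $w$ lies in a span of words of total length strictly below $n$, contradicting freshness. You merely make explicit the bilinearity step that the paper leaves implicit.
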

\begin{proof}
	Let us consider a word $w$ of the  length greater than 1. It can be represented as a product of  two words $s$ and $t$ of the lengths $a,b>0$, respectively. Let us assume that $s$ is not fresh. 
	
Then there exists $ a' \in \NN \cup \{0\}$, which satisfies $a'<a$ and $s \in \L_{a'} (\SS)$. Hence, $w$ which is  a word of the length $a+b$, belongs to $\L_{a'+b}(\SS)$ and, by the definition, it is not fresh. It can be shown in the same way that if $t$ is not fresh, then $w$ is not fresh as well, which proves the statement of the lemma.
\end{proof}

\section{Characteristic sequences and their basic properties}

In this section we introduce our main tool actual for all further considerations.

\begin{definition} \label{CharSeq}
Consider a unital $\F$-algebra $\A$   of the  dimension $\dim \A = n$,
	and its generating set $\SS$. By the {\em characteristic sequence} of $\SS$
	in $\A$ we understand a monotonically non-decreasing sequence of natural numbers $(m_0, m_1,\ldots, m_N)$, constructed by the following rules:
	\begin{enumerate}
		\item $m_0 = 0$.
\item Denoting $s_1=\dim \L_1(\SS) - 1$, we define $m_1=\ldots =m_{s_1}=1$.
\item If $m_0,\ldots, m_r$ are already constructed and the sets $ \L_1(\SS) ,\ldots,  \L_{k-1}(\SS) $ are considered, then we inductively continue the process in the following way. Denote $s_k=\dim \L_k(\SS) - \dim \L_{k-1}(\SS)$. Then $m_{r+1} =\ldots =m_{r+s_k}=k$.  
	\end{enumerate}
\end{definition}

\begin{remark} In other words, to construct a characteristic sequence we start with $m_0 = 0$ and for each $k=0,\ldots, l(\A)$ we add $(\dim \L_k(\SS) - \dim \L_{k-1}(\SS))$ elements equal to $k$.
\end{remark}

\begin{remark}
It is worth noting that in the associative case 
 subsequent elements of a characteristic sequence are either equal or differ by 1, since for a generating set $\SS$, $\dim \L_k(\SS) - \dim \L_{k-1}(\SS) = 0$ implies that for every integer $h>k$, $\dim \L_h(\SS) - \dim \L_{h-1}(\SS) = 0$.
\end{remark}

\begin{lemma}\label{fr_char}
Let $\A$ be an $\F$-algebra, $\dim \A = n >2$, and $\SS$ be
a generating set for $\A$. Then 

1. Positive integer $k$ appears in the characteristic sequence  as many times as  many there are linearly independent fresh words of the length~$k$.

2. For any term $m_h$ of the characteristic sequence of $\SS$ there is a fresh word in $\L(\SS)$ of the length $m_h$.

3.	If there is a  fresh word in $\L(\SS)$ of the length $k$, then $k$ is included into the characteristic sequence of $\SS$.
\end{lemma}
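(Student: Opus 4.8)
The plan is to prove all three statements by connecting the combinatorial notion of a fresh word to the algebraic quantity $s_k = \dim \L_k(\SS) - \dim \L_{k-1}(\SS)$, which by construction (Definition \ref{CharSeq}) is exactly the multiplicity with which $k$ occurs in the characteristic sequence. The central claim to establish is therefore statement~1: that $s_k$ equals the maximal number of linearly independent fresh words of length $k$. Once this is in hand, statements~2 and~3 follow almost immediately, since they simply translate ``$k$ appears in the sequence'' into ``$s_k > 0$'' and back, so I would prove~1 carefully and then dispatch~2 and~3 as short corollaries.

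For statement~1, I would argue that $\L_k(\SS)$ is spanned by $\L_{k-1}(\SS)$ together with the words of length exactly $k$. Consider the quotient space $\L_k(\SS) / \L_{k-1}(\SS)$, whose dimension is precisely $s_k$. The key observation is that a word $w$ of length $k$ projects to a nonzero element of this quotient if and only if $w \notin \L_{k-1}(\SS)$, which (since $w$ is automatically in $\L_m(\SS)$ for $m \ge k$ and the subspaces are nested) is exactly the condition that $w$ is fresh. First I would show that the images of the length-$k$ words span the quotient: since $\L_k(\SS) = \langle \L_{k-1}(\SS) \cup \{\text{words of length } k\}\rangle$, modding out by $\L_{k-1}(\SS)$ leaves the length-$k$ words as a spanning set. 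Then I would extract a basis of the quotient from among these images; lifting such a basis gives exactly $s_k$ words of length $k$ whose images are linearly independent, and an image being nonzero means the word is fresh. Conversely, any set of fresh words of length $k$ that is linearly independent in $\A$ remains linearly independent modulo $\L_{k-1}(\SS)$ (a nontrivial dependence modulo $\L_{k-1}$ would place a nonzero combination into $\L_{k-1}$, contradicting freshness after adjusting), so the maximal number of linearly independent fresh words of length $k$ cannot exceed $s_k$. Combining both directions yields equality.

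For statement~2, if $m_h$ is a term of the sequence then by the construction $s_{m_h} > 0$, i.e.\ $\dim \L_{m_h}(\SS) > \dim \L_{m_h - 1}(\SS)$, so there is a word of length $m_h$ lying outside $\L_{m_h-1}(\SS)$; that word is fresh by definition, giving the desired fresh word of length $m_h$. For statement~3, a fresh word of length $k$ certifies that $\L_k(\SS) \supsetneq \L_{k-1}(\SS)$, hence $s_k \ge 1$, and by the construction rule in Definition \ref{CharSeq} the value $k$ is then inserted at least once into the sequence. I expect the main obstacle to be the careful handling of the equivalence between freshness and nonvanishing in the quotient, together with the bookkeeping that translates ``linearly independent modulo $\L_{k-1}(\SS)$'' into ``linearly independent fresh words''; in particular one must verify that a maximal linearly independent family of length-$k$ fresh words really realizes the full dimension $s_k$ rather than undercounting, which is where the spanning argument for the quotient does the essential work. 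The hypothesis $n > 2$ should only be needed to guarantee that the relevant lengths and fresh words behave as expected in the nondegenerate regime, and I would keep an eye on whether any small-dimension edge cases require separate comment.
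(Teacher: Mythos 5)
Your overall strategy is the same as the paper's: the paper's entire proof of item~1 is the single assertion that fresh words of length at most $k$ form a basis of $\L_k(\SS)$, so that the multiplicity of $k$ in the sequence equals $\dim \L_k(\SS)-\dim\L_{k-1}(\SS)$; your quotient-space argument is the careful version of exactly that, and items 2 and 3 are dispatched identically in both write-ups. However, one step in your converse direction is false as stated: it is \emph{not} true that fresh words of length $k$ that are linearly independent in $\A$ remain linearly independent modulo $\L_{k-1}(\SS)$. Two words of length $k$ may differ by a nonzero element of $\L_{k-1}(\SS)$; each is then fresh and the pair is independent in $\A$, yet their images in $\L_k(\SS)/\L_{k-1}(\SS)$ coincide up to that difference. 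For instance, take $\SS=\{e_1\}$ with $e_1^2=e_2$, $e_2e_1=e_3$, $e_1e_2=e_2+e_3$: the two length-$3$ words $e_3$ and $e_2+e_3$ are fresh and independent in $\A$, but $s_3=\dim\L_3(\SS)-\dim\L_2(\SS)=1$. So under the reading ``linearly independent in $\A$'' the count of fresh words can strictly exceed $s_k$ and item~1 itself would be false; the lemma is only true, and your argument only closes, if ``linearly independent fresh words of length $k$'' means ``fresh words of length $k$ linearly independent modulo $\L_{k-1}(\SS)$'' --- which is also the only reading the paper's one-line proof supports, and the only one used later (in Proposition~\ref{pr_3_0} and Proposition~\ref{pr_3} all that matters is that each term corresponds to \emph{some} fresh word of that length). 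Under that reading your converse inequality is immediate, the parenthetical justification should simply be deleted, and the rest of your proof --- in particular the spanning argument showing that a basis of the quotient can be chosen among images of length-$k$ words, all of which are fresh --- stands and is in fact more complete than the paper's.
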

\begin{proof}
1. Fresh words of  lengths less than  or equal to $k$ form a basis of $\L_k(\SS)$, therefore the number of fresh words of the  length exactly  $k$ is equal to $\dim \L_k(\SS) - \dim \L_{k-1}(\SS)$.

2. Follows directly from 1.

3. Follows from the proof of 1.
\end{proof}

\begin{lemma}\label{N=n-1}
Let $\A$ be an $\F$-algebra, $\dim \A = n>2$, and $\SS$ be
	a generating set for $\A$. Then the  characteristic sequence of $\SS$ contains $n$ terms, i.e., $N=n-1$. Moreover, $m_N=l(\SS)$.
\end{lemma}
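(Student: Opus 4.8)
The plan is to determine $N$ by counting the total number of terms of the characteristic sequence through a telescoping argument, and then to read off the value $m_N$ of the last term directly from the definition of $l(\SS)$. Both assertions reduce to elementary bookkeeping with the dimensions $\dim \L_k(\SS)$, once one observes that the construction in Definition \ref{CharSeq} simply records, for each $k$, how many new basis dimensions appear when passing from $\L_{k-1}(\SS)$ to $\L_k(\SS)$.

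For the first assertion, I would note that the sequence consists of $m_0 = 0$ together with, for each $k \ge 1$, precisely $s_k = \dim \L_k(\SS) - \dim \L_{k-1}(\SS)$ copies of $k$. Thus $N + 1 = 1 + \sum_{k \ge 1} s_k$. Since $\SS$ is a generating set, $\L_k(\SS) = \A$ for all $k \ge l(\SS)$, so $s_k = 0$ for $k > l(\SS)$ and the sum is finite and telescopes to $\dim \L_{l(\SS)}(\SS) - \dim \L_0(\SS)$. Substituting $\dim \L_0(\SS) = \dim \langle 1 \rangle = 1$ and $\dim \L_{l(\SS)}(\SS) = \dim \A = n$ gives $\sum_{k \ge 1} s_k = n - 1$, hence $N + 1 = n$ and $N = n - 1$.

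For the second assertion, I would observe that $m_N$ is by construction the largest integer $k$ for which a copy of $k$ is inserted, i.e. the largest $k$ with $s_k > 0$ (the insertions are performed in increasing order of $k$). From $l(\SS) = \min\{k : \L_k(\SS) = \A\}$ we get $\L_{l(\SS)-1}(\SS) \subsetneq \L_{l(\SS)}(\SS) = \A$, whence $s_{l(\SS)} > 0$, while $s_k = 0$ for every $k > l(\SS)$ because both $\L_k(\SS)$ and $\L_{k-1}(\SS)$ then equal $\A$. Therefore the largest index carrying a positive increment is exactly $l(\SS)$, so $m_N = l(\SS)$.

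The underlying computation is routine, and I do not expect a serious obstacle. The one point demanding care — and the reason a naive argument could go wrong — is that in the non-associative setting the increments $s_k$ need not vanish permanently after first becoming zero (cf. Example \ref{ex1}), so one must not argue that $m_N = l(\SS)$ by appealing to monotonicity of the $s_k$ or to the ``first stabilization'' of the dimensions. Both the telescoping count and the direct identification of the last term sidestep this issue, since neither depends on the order in which new dimensions are acquired. If one prefers, the same two facts can be re-derived through Lemma \ref{fr_char} by counting linearly independent fresh words, which yields an equivalent and equally short route.
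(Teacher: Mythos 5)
Your proof is correct and follows essentially the same route as the paper's: a telescoping count of the increments $s_k=\dim \L_k(\SS)-\dim \L_{k-1}(\SS)$ gives $N=n-1$, and identifying $l(\SS)$ as the largest $k$ with $s_k>0$ gives $m_N=l(\SS)$. Your explicit remark that one must not rely on monotonicity of the $s_k$ in the non-associative setting is a nice clarification, but the argument is the same.
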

\begin{proof}
By the definition for each $k=1,\ldots, l(\SS)$ on $k$th step  we add $(\dim \L_k(\SS) - \dim \L_{k-1}(\SS))$ terms to the characteristic sequence. Hence, the total number of terms is 
\[ 1 +  (\dim \L_1(\SS) - 1) + \ldots + (\dim \L_k(\SS) - \dim \L_{k-1}(\SS)) +\ldots \] \[ \ldots+ (\dim \L_{l(\SS)}(\SS) - \dim \L_{l(\SS)-1}(\SS)) =\dim \L(\SS)=n\]
since $\SS$ is a generating set. Also, by Definition \ref{sys_len}, the maximal $k$ such that $\dim \L_k(\SS) - \dim \L_{k-1}(\SS) >0$ is $l(\SS)$, hence, by Definition \ref{CharSeq}, we obtain $m_N=l(\SS)$.
\end{proof}

\begin{lemma}\label{sp_prop}
Let $\A$ be an $\F$-algebra, $\dim \A = n>2$. Assume, $\SS$  is
a generating set of $\A$, and $(m_0, m_1,\ldots, m_{n-1})$ is a characteristic sequence of $\SS$. Then  for
any integer $k \geq 0$ it holds that 
$\dim \L_k(\SS) = \max \{t|m_t \leq k\}+1$.
\end{lemma}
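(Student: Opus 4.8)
The plan is to establish the identity by a direct counting argument that exploits the monotonicity of the characteristic sequence together with the telescoping structure built into its definition. Write $s_k = \dim \L_k(\SS) - \dim \L_{k-1}(\SS)$ for $k \geq 1$ (with the convention $\dim \L_0(\SS) = 1$), so that by Definition \ref{CharSeq} the value $k$ occurs exactly $s_k$ times in the sequence, while the single term $m_0 = 0$ accounts for the value $0$.

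First I would observe that, since $(m_0, \ldots, m_{n-1})$ is non-decreasing, the set $\{t \mid m_t \leq k\}$ is an initial segment of the index set $\{0, 1, \ldots, n-1\}$. It is nonempty for every $k \geq 0$ because $m_0 = 0 \leq k$. Hence if we set $M = \max\{t \mid m_t \leq k\}$, then $\{t \mid m_t \leq k\} = \{0, 1, \ldots, M\}$, and consequently the quantity we must compute satisfies $M + 1 = |\{t \mid m_t \leq k\}|$. It therefore suffices to count the number of terms of the characteristic sequence that do not exceed $k$.

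Next I would compute that cardinality from the construction rules. The terms not exceeding $k$ are the single term $m_0 = 0$ together with the $s_j$ copies of $j$ for each $j = 1, \ldots, k$, so that $|\{t \mid m_t \leq k\}| = 1 + \sum_{j=1}^k s_j$. The sum telescopes: $\sum_{j=1}^k s_j = \sum_{j=1}^k \left(\dim \L_j(\SS) - \dim \L_{j-1}(\SS)\right) = \dim \L_k(\SS) - \dim \L_0(\SS) = \dim \L_k(\SS) - 1$. Substituting yields $|\{t \mid m_t \leq k\}| = \dim \L_k(\SS)$, whence $M + 1 = \dim \L_k(\SS)$, which is exactly the claimed formula.

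The argument presents no serious obstacle; the only point requiring care is the genuinely non-associative phenomenon recorded in Example \ref{ex1}, where some $s_j$ may vanish so that an intermediate value $j$ is simply absent from the sequence. This does not disturb the counting, because the telescoping sum is insensitive to zero increments and the monotonicity argument in the first step never assumes that consecutive sequence values differ by at most one. For completeness I would also check the boundary behaviour: when $k \geq l(\SS) = m_N$ every term is $\leq k$, so $M = n-1$ and the formula returns $\dim \L_k(\SS) = n = \dim \A$, consistent with $\SS$ being a generating set.
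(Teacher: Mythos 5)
Your proof is correct and is essentially the paper's argument: the paper establishes the same identity by induction on $k$, adding the $\dim \L_{q+1}(\SS)-\dim\L_q(\SS)$ new terms at each step, which is precisely your telescoping sum unrolled. Your version has the minor virtue of making explicit why $\max\{t \mid m_t\le k\}+1$ equals the cardinality of $\{t \mid m_t\le k\}$ (monotonicity of the sequence), a point the paper leaves implicit.
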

\begin{proof}
	We use the  induction on $k$.

 Induction base.  For $k=0$ the statement is trivial.
	
Induction step. Let us assume that the statement is true for $k=q$. Then for $k=q+1$ one has
$\dim \L_{q+1}(\SS)= (\dim \L_{q+1}(\SS) - \dim \L_{q}(\SS)) + \dim \L_{q}(\SS)$. By Definition~\ref{CharSeq} the summand $(\dim \L_{q+1}(\SS) - \dim \L_{q}(\SS))$ equals to the number $N_0$ of terms $(q+1)$ in the characteristic sequence. 
By the induction hypothesis 
$N_1=\dim \L_q(\SS) = \max \{t|m_t \leq q\}+1$, i.e., the increased by 1  index of the  last position in which $m_t \leq q$. 
By Definition~\ref{CharSeq} the sum $N_0+N_1$ 
equals to the  increased by 1  index of the last position in which  $m_t \leq q+1$, or $\max \{t|m_t \leq q+1\}+1$.
\end{proof}

\begin{proposition}\label{pr_3_0}
Let $\A$ be an $\F$-algebra, $\dim \A = n>2$. Assume, $\SS$ is a generating set for $\A$ and $(m_0, m_1,\ldots, m_{n-1})$ is  the characteristic sequence of $\SS$. Then  for each $h$
	satisfying $m_h \geq 2$ it holds that there are indices $0<t_1 \leq t_2 < h$ such that $m_h=m_{t_1}+m_{t_2}$.
\end{proposition}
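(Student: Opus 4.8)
The plan is to combine the structural Lemma \ref{lem_1} on fresh words with the dictionary between terms of the characteristic sequence and fresh words recorded in Lemma \ref{fr_char}. First I would observe that the value $m_h$ is, by part 2 of Lemma \ref{fr_char}, the length of some fresh word $w \in \L(\SS)$. Since $m_h \geq 2$, this word has length at least $2$, so Lemma \ref{lem_1} applies and produces a factorization $w = w_1 w_2$ into two fresh words of nonzero lengths $a$ and $b$ with $a + b = m_h$.

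Next I would translate the lengths $a$ and $b$ back into indices of the characteristic sequence. Because $w_1$ is a fresh word of length $a$, part 3 of Lemma \ref{fr_char} guarantees that $a$ occurs as a term of the sequence; likewise $b$ occurs. Thus there are indices $t_1, t_2$ with $m_{t_1} = a$ and $m_{t_2} = b$, whence $m_{t_1} + m_{t_2} = a + b = m_h$. Since $m_0 = 0$ while $a, b \geq 1$, both indices are strictly positive.

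It remains to secure the inequalities $0 < t_1 \leq t_2 < h$, and for this I would invoke the monotonicity of the characteristic sequence. As $a, b > 0$ and $a + b = m_h$, each of $a$ and $b$ is strictly smaller than $m_h$; by non-decrease, every index carrying a value strictly below $m_h$ must precede position $h$ (if some $t \geq h$ had $m_t < m_h$, monotonicity would force $m_t \geq m_h$, a contradiction), so $t_1, t_2 < h$. Relabeling so that $a \leq b$, and taking $t_1 = t_2$ in the degenerate case $a = b$ where a single index is used twice, yields $t_1 \leq t_2$ while preserving $m_h = m_{t_1} + m_{t_2}$.

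The argument is short once the two lemmas are available; the only genuine care is needed in the final step, namely guaranteeing the strict bound $t_2 < h$ via monotonicity and handling the case $a = b$. The substantive content — that a length-$m_h$ fresh word splits as a product of two shorter fresh words — is delivered entirely by Lemma \ref{lem_1}, so I do not anticipate any deeper obstacle.
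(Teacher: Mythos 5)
Your proposal is correct and follows essentially the same route as the paper's proof: identify $m_h$ with the length of a fresh word via Lemma \ref{fr_char}, factor that word into two fresh words of positive lengths using Lemma \ref{lem_1}, and translate the two lengths back into indices of the characteristic sequence. Your explicit justification of $t_2<h$ via monotonicity is a point the paper leaves implicit, but the argument is the same.
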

\begin{proof}
By Lemma \ref{fr_char} Item 1 each term $m_h$ of the characteristic sequence corresponds to a fresh word of the length $m_h$, denote it by $w_{m_h}$. By Lemma \ref{lem_1}, each fresh word of the length $m_h \ge 2$ can be represented as a product of two fresh words, possibly equal, of lesser lengths. Thus, $w_{m_h}=w_{k_1}\cdot w_{k_2}$ for some  fresh words $w_{k_1},  w_{k_2}$ of the  lengths $k_1 , k_2$, correspondingly.  Assume $k_1\le k_2$. Then by  Lemma \ref{fr_char} Item 3 there are indices $0<t_1 \leq t_2 < h$ such that $m_{t_1}=k_1$ and $m_{t_2}=k_2$. 
Assume $k_1 > k_2$. Then by Lemma \ref{fr_char} Item 3 there are indices $0<t_1\le t_2<h$ such that $m_{t_1} = k_2$ and $m_{t_2}=k_1$. In both cases, the additivity of word length	concludes the proof.
\end{proof}

\section{Upper bound for the lengths of non-associative algebras}

\begin{theorem}\label{th_0}
Let $\A$ be an $\F$-algebra of the dimension $\dim \A = n$, $n>2$, $\SS$ be
a generating set for $\A$, $(m_0, m_1,\ldots, m_{n-1})$ be the characteristic sequence of $\SS$. Then for each  positive integer  $h \leq n-1$ it holds that $m_h \leq 2^{h-1}$.
\end{theorem}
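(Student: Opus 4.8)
The plan is to prove this by strong induction on $h$, with the engine being Proposition~\ref{pr_3_0}, which tells us that every term $m_h \geq 2$ decomposes as a sum $m_h = m_{t_1} + m_{t_2}$ of two strictly earlier terms with $0 < t_1 \leq t_2 < h$. The exponential bound $2^{h-1}$ is exactly what one obtains by starting from $m_1 = 1$ and allowing each step to at most double the value, so the additive decomposition into earlier terms is the right hypothesis to feed into an induction.

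First I would settle the base case $h = 1$. Since $\SS$ generates $\A$ and $\dim \A = n > 2$, we have $\dim \L_1(\SS) \geq 2$, so $s_1 = \dim \L_1(\SS) - 1 \geq 1$ and the construction in Definition~\ref{CharSeq} forces $m_1 = 1$. Thus $m_1 = 1 = 2^{0} = 2^{1-1}$, and the claim holds for $h = 1$.

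For the inductive step, assume $m_j \leq 2^{j-1}$ for every index $j$ with $1 \leq j < h$, and consider $m_h$. If $m_h = 1$, then trivially $m_h = 1 \leq 2^{h-1}$ for all $h \geq 1$. If instead $m_h \geq 2$, I would invoke Proposition~\ref{pr_3_0} to write $m_h = m_{t_1} + m_{t_2}$ with $0 < t_1 \leq t_2 < h$. Here the key inequalities are that $t_2 \leq h-1$ and, by monotonicity of the characteristic sequence together with $t_1 \leq t_2$, that $m_{t_1} \leq m_{t_2}$. Applying the induction hypothesis to the index $t_2$ (and noting $t_1 \geq 1$ so both indices are in range) gives
\[
m_h = m_{t_1} + m_{t_2} \leq m_{t_2} + m_{t_2} \leq 2^{t_2-1} + 2^{t_2-1} = 2^{t_2} \leq 2^{h-1},
\]
which closes the induction.

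Honestly there is no serious obstacle here once Proposition~\ref{pr_3_0} is in hand: the whole argument is a clean strong induction, and the only point requiring a little care is recognizing that one should bound \emph{both} summands by $2^{t_2 - 1}$ (using monotonicity to control the smaller index $t_1$) rather than trying to track $t_1$ and $t_2$ separately. The decomposition $2^{t_2-1} + 2^{t_2-1} = 2^{t_2}$ combined with $t_2 \leq h-1$ is exactly the ``doubling'' that produces the exponential bound, so the real content of the theorem lives entirely in the earlier fresh-word machinery that yields Proposition~\ref{pr_3_0}.
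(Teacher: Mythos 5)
Your proof is correct and follows essentially the same route as the paper: strong induction on $h$ with base case $m_1=1$, using Proposition~\ref{pr_3_0} to write $m_h=m_{t_1}+m_{t_2}$ and bounding the sum by $2\cdot 2^{h-2}=2^{h-1}$. The only cosmetic difference is that you bound both summands by $2^{t_2-1}$ via monotonicity while the paper bounds each by $2^{t_i-1}\leq 2^{k-1}$ directly; your explicit handling of the case $m_h=1$ is a small extra care the paper leaves implicit.
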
	
\begin{proof}
	We  prove this statement using the induction on $h$. 

The base. Case $h=1$ is trivial since $m_1=1\leq2^0$. 

The step. Let us assume  that for all positive integers $k$ such that $h \leq k <n-1$ the statement holds. We have to prove it now for $h=k+1\le n-1$. By Proposition \ref{pr_3_0} we have $m_{k+1}=m_{t_1}+m_{t_2}$, where $0<t_1 \leq t_2 < k+1$. According to the induction hypothesis, \[m_{t_1}+m_{t_2} \leq 2^{t_1 -1} +  2^{t_2 -1} \leq 2^{k-1}+2^{k-1}= 2^k,\] which concludes the proof.
\end{proof}

\begin{proposition}\label{gen_th}
Let $\A$ be an  $\F$-algebra,  $\dim \A = n>2$. Then $l( \A)\le 2^{n-2}$.
\end{proposition}
\begin{proof}
Let $\SS$ be an arbitrary generating set of $\A$. By Lemma \ref{N=n-1} the length $l(\SS)$ is equal to the last element of characteristic sequence of $\SS$. The index of this element is $\dim \A-1=n-1$. Hence by Theorem~\ref{th_0} we get 
$l(\SS) \leq 2^{(n-1)-1}=2^{n-2}$.
\end{proof}

The example below demonstrates that the obtained bound is sharp.

\begin{example}\label{ex2_0}
Let us consider an arbitrary field $\F$ and non-associative $\F$-algebra $\A$ of the dimension $n>2$ with the  basis $\{e_0=1 , e_1, \ldots, e_{n-1}\}$ and following multiplication rules: for every $k$ such that $1 \leq k \leq n-2$ we take 
$e_k ^2=e_{k+1},$
$e_{n-1}^2=0,$
	and for all 
$p,q$, $ p\ne q$, $1 \leq p,q \leq n-1, $
$e_p e_q = 0.$
	
Then the set $\SS$=\{$e_1$\} generates the algebra $\A$. Its characteristic sequence
is exactly $(0,1,2, \ldots ,2^{n-3}, 2^{n-2})$, since each new fresh word, except the first one, is the square
of the previous one. Then by Lemma \ref{N=n-1} we have $l(\SS)=2^{n-2}$. Since any generating set of $\A$ should contain $e_1$, we get $l(\A)=l(\SS)=2^{n-2}$. Since by the previous proposition $l(\A)\le 2^{n-2}$ and in general $l(\A) \ge l(\SS)$ we get $l(\A)=l(\SS)=2^{n-2}$.
\end{example}

\section{Basic properties of locally-complex algebras and their length}

The class of locally-complex algebras  provides a  natural generalization of the field of complex numbers $\C$, namely
\begin{definition}
	$\A$ is a {\em locally-complex algebra}, if it is finitely generated non-associative algebra over the field $\R$, such that any 1-generated subalgebra of $\A$, which is generated by an element of $\A \backslash \R$, is isomorphic to the field of complex numbers, $\C$.
\end{definition}

These algebras were introduced and investigated in \cite{BrSeSp11}. In this work we deal with the following equivalent definitions of locally-complex algebras, established in~\cite{BrSeSp11}.

\begin{lemma}[{\rm \cite[Lemma 4.1]{BrSeSp11}}] The following conditions are equivalent for a real unital algebra $\A$:
	\begin{enumerate}
		\item $\A$ is locally-complex;
		\item every $0\neq a \in A$ has a multiplicative inverse lying in $\R a + \R$ ;
		\item $\A$ is quadratic and $\A$ has no  nontrivial idempotents or square-zero elements;
		\item $\A$ is quadratic and $n(a)>0$ for every $0\neq a \in A$. 	\end{enumerate}
Moreover, if $2\le \dim A= n<\infty$, then (1)-(4) are equivalent to 
	\begin{enumerate}		
		\item[5.] $\A$ has a basis $\{1,e_1,\ldots,e_{n-1}\}$ such that $e_i^2=-1$ for all $i$ and $e_i e_j = -e_j e_i$ for all $i\neq j$.
	\end{enumerate}
\end{lemma}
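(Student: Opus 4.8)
The plan is to prove the equivalence of (1)--(4) by a short cycle of implications and then to treat condition (5) separately in the finite-dimensional case, since (5) carries the only genuinely substantive content. Throughout I would use that for a unital algebra \emph{quadratic} means $a^2 \in \R a + \R 1$ for every $a$, so that each $a \notin \R 1$ satisfies a unique monic real quadratic $x^2 - t(a)x + n(a)$, and the subalgebra it generates is isomorphic to $\R[x]/(x^2 - t(a)x + n(a))$.

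First I would establish (1) $\Leftrightarrow$ (3) $\Leftrightarrow$ (4) by analysing this quadratic. The subalgebra generated by a non-scalar $a$ is isomorphic to $\C$ exactly when the quadratic is irreducible over $\R$, i.e. has negative discriminant $t(a)^2 - 4n(a) < 0$; a nonnegative discriminant produces either a nontrivial idempotent (two distinct real roots, $\R \times \R$) or a nonzero square-zero element (a double root, the dual numbers). This gives (1) $\Leftrightarrow$ (3) directly. To pass to (4), I would complete the square: setting $a' = a - \tfrac{t(a)}{2}\cdot 1$ gives $(a')^2 = \tfrac{t(a)^2 - 4n(a)}{4}\cdot 1$, so negativity of the discriminant for every non-scalar $a$ is precisely positivity of $n$ on the trace-zero part, and a routine rescaling argument extends this to $n(a) > 0$ for all $a \neq 0$. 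The equivalence (1) $\Leftrightarrow$ (2) is then immediate: in $\C$ every nonzero element is invertible with inverse in the two-dimensional span $\R 1 + \R a$, while conversely invertibility inside $\R 1 + \R a$ excludes the degenerate quadratics (the idempotent and the nilpotent are non-invertible).

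The heart of the argument is the finite-dimensional equivalence with (5). Assuming (4), I would first record the standard structural facts for quadratic algebras: the trace $t$ is linear and the norm $n$ is a quadratic form, so the trace-zero elements form a subspace $V$ with $\dim V = n-1$ and $\R 1 \oplus V = \A$. For $a,b \in V$ one has $a^2 = -n(a)\cdot 1$, and polarizing $(a+b)^2 = -n(a+b)\cdot 1$ yields the key identity
\[
ab + ba = -\big(n(a+b) - n(a) - n(b)\big)\cdot 1 \in \R 1 .
\]
Since (4) makes $n|_V$ positive definite, I would choose a basis $e_1,\ldots,e_{n-1}$ of $V$ that is orthonormal for $n$ (possible over $\R$ precisely by positive definiteness). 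Then $e_i^2 = -n(e_i)\cdot 1 = -1$, and orthogonality together with the displayed identity gives $e_i e_j + e_j e_i = 0$, i.e. $e_i e_j = -e_j e_i$ for $i \neq j$; this is exactly (5). The converse (5) $\Rightarrow$ (4) is a one-line computation: for $a = \alpha\cdot 1 + \sum \beta_i e_i$ the relations give $a^2 = 2\alpha\, a - (\alpha^2 + \sum \beta_i^2)\cdot 1$, so $\A$ is quadratic with $n(a) = \alpha^2 + \sum\beta_i^2 > 0$ whenever $a \neq 0$.

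I expect the main obstacle to be the two ingredients feeding condition (5): establishing that the trace is linear and the norm is a genuine quadratic form (so that $V$ is a subspace and the polarization identity is available), and then invoking positive-definiteness to secure an $n$-orthonormal basis. The cycle among (1)--(4) is essentially bookkeeping about real quadratics, whereas the passage to the explicit anticommuting basis is where finite-dimensionality and the linear-algebraic structure of the norm form do the real work.
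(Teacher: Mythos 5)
This lemma is imported from \cite{BrSeSp11} and the paper gives no proof of it, so there is no in-paper argument to compare against; I can only assess your proposal on its own terms. Your route is the standard one (and, as far as I can tell, essentially the one in the cited reference): reduce (1), (3), (4) to the sign of the discriminant of the quadratic $x^2-t(a)x+n(a)$ satisfied by a non-scalar $a$, handle (2) by noting which two-dimensional real quadratic algebras contain non-invertible nonzero elements, and obtain (5) from the decomposition $\A=\R 1\oplus V$ together with an $n$-orthonormal basis of the trace-zero part $V$. All of these steps are correct, and your verification of (5) $\Rightarrow$ (4) is complete.

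Two points deserve attention. First, the one genuinely substantive ingredient --- that in a quadratic algebra the trace is linear, so that $V=\{a: t(a)=0\}\cup\{0\}$ is a linear subspace with $\A=\R 1\oplus V$ and $n|_V$ is an honest quadratic form --- is correctly identified by you as the crux but is not proved; it is not formal bookkeeping, since its standard proof requires the separate argument that for $1,a,b$ linearly independent, adding the quadratic relations for $a+b$ and $a-b$ forces $t(a+b)+t(a-b)=0$ and $t(a+b)-t(a-b)=0$. A complete write-up must include this. Second, in the implication (2) $\Rightarrow$ (1)/(3) you should first derive quadraticity from (2): if $b=\alpha a+\beta$ satisfies $ab=1$ then $\alpha a^2+\beta a=1$, so either $a\in\R$ or $a^2\in\R 1+\R a$; without this line the phrase ``invertibility excludes the degenerate quadratics'' presupposes that a quadratic relation exists. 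With these two additions your outline becomes a complete and correct proof.
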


Here by a quadratic algebra we understand such a unital $\R$-algebra $\A$ that for every $a \in \A$ the elements $1,a$ and $a^2$ are linearly dependent. By $n(a)$ we understand $a^2$ if $a\in \R$ and  a real number $n$ such that $a^2 -t(a) a +n=0$ ($t(a) \in \R$) if $a\not\in \R$. Since we mainly use multiplication tables as a way of defining algebras, the property (v) is the one which is the most relevant for the present paper.

For locally-complex algebras we can improve the bound obtained in Proposition~\ref{pr_1}.

\begin{proposition}\label{pr_2}
	Let us consider a finite subset $\SS$ of a locally-complex algebra $\A$ and an integer $n \geq 2$. Let \\
	$(i)$  $\dim \L_{n-1}(\SS)+1 = \dim \L_{n}(\SS)$, and \\ $(ii)$	
	$\dim \L_{n}(\SS) = \dim \L_{n+1}(\SS) = \ldots = \dim \L_{2n-1}(\SS).$ 
	\\
	Then for all $t \in \mathbb{N}$  it holds  that $\dim \L_{n}(\SS) = \dim \L_{n+t}(\SS)$.
\end{proposition}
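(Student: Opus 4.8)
The plan is to deduce the statement from Proposition \ref{pr_1}. That proposition already yields the conclusion as soon as one knows the dimensions are constant on the full range $\dim \L_{n}(\SS) = \dim \L_{n+1}(\SS) = \ldots = \dim \L_{2n}(\SS)$; hypothesis $(ii)$ supplies all of these equalities except the last one, $\dim \L_{2n-1}(\SS) = \dim \L_{2n}(\SS)$. Thus the whole problem reduces to proving this single extra equality, and this is precisely where the locally-complex (hence quadratic) structure, together with the sharpened hypothesis $(i)$, has to enter.

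To obtain $\dim \L_{2n}(\SS) = \dim \L_{2n-1}(\SS)$ I would show that there is no fresh word of length $2n$. Suppose to the contrary that $u$ is such a fresh word. By Lemma \ref{lem_1} we may write $u = u_1 u_2$ with $u_1, u_2$ fresh words of nonzero lengths $a \le b$ and $a + b = 2n$. By Lemma \ref{fr_char} a fresh word of length $k$ exists only when $\dim \L_k(\SS) > \dim \L_{k-1}(\SS)$; by $(ii)$ this fails for every $k \in \{n+1, \ldots, 2n-1\}$, so both $a$ and $b$ lie in $\{1, \ldots, n\}$, and $a+b=2n$ forces $a = b = n$. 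Moreover, by $(i)$ together with Lemma \ref{fr_char} we have $\L_n(\SS) = \L_{n-1}(\SS) \oplus \F w$ for a single fresh word $w$ of length $n$, so $u_1 = \alpha w + p$ and $u_2 = \beta w + q$ with $p, q \in \L_{n-1}(\SS)$ and nonzero scalars $\alpha, \beta$ (nonzero since $u_1, u_2 \notin \L_{n-1}(\SS)$).

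Expanding bilinearly, $u = \alpha\beta\, w^{2} + \alpha\,(wq) + \beta\,(pw) + pq$. The three mixed terms are harmless: $wq$ and $pw$ are linear combinations of words of length at most $2n-1$, and $pq \in \L_{n-1}(\SS)\cdot\L_{n-1}(\SS) \subseteq \L_{2n-2}(\SS)$, so all three lie in $\L_{2n-1}(\SS)$, which equals $\L_{n}(\SS)$ by $(ii)$. The decisive term is $w^{2}$: although as a word it has length $2n$, the algebra is quadratic, so $1, w, w^{2}$ are linearly dependent; since $w$ is fresh of length $n \ge 2$ it is not a scalar, hence $1, w$ are independent and $w^{2} \in \langle 1, w\rangle \subseteq \L_{n}(\SS)$. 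Therefore $u \in \L_{n}(\SS) = \L_{2n-1}(\SS)$, contradicting the freshness of $u$. So no fresh word of length $2n$ exists, $\dim \L_{2n}(\SS) = \dim \L_{2n-1}(\SS)$, and Proposition \ref{pr_1} closes the argument.

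I expect the main obstacle to be isolating exactly why quadraticity is enough. Without it, $w^{2}$ could be a genuinely new fresh word of length $2n$ — Example \ref{ex2} shows that such late-appearing fresh words really do occur for general non-associative algebras — so the shortened chain of $(ii)$ alone would not force stabilization. The strengthened hypothesis $(i)$ is equally indispensable: it is what makes both factors $u_1, u_2$ collapse onto the \emph{same} one-dimensional fresh direction $w$ modulo $\L_{n-1}(\SS)$, so that the only potentially dangerous contribution in the product is the single square $w^{2}$, which the quadratic relation then controls.
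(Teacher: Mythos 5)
Your proposal is correct and follows essentially the same route as the paper: reduce to Proposition \ref{pr_1}, use hypothesis $(i)$ to write both length-$n$ factors as $\L_{n-1}(\SS)$-part plus a scalar multiple of a single new element, expand bilinearly, and use quadraticity to absorb the square term into $\langle 1, w\rangle \subseteq \L_n(\SS)$. The only cosmetic difference is that you phrase the argument via fresh words and contradiction (invoking Lemmas \ref{lem_1} and \ref{fr_char} to force both factors to have length exactly $n$), whereas the paper directly shows every word of length $2n$ lies in $\L_n(\SS)$ by a short case split on the length of the first factor.
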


\begin{proof}
	By Proposition \ref{pr_1}, it is sufficient to show that in these conditions $\dim \L_{n}(\SS) = \dim \L_{2n}(\SS)$.
	
	Let us consider $w$, a word of the length $2n$. It can be represented as a product $w=s t$ of two words $s,t$ such that   $l(s)=k>0$ and $l(t)=2n-k>0$. Without loss of generality $k \leq n$. If $k<n$, then, since $t$ is an element of $\L_{2n-k}(\SS) = \L_{n}(\SS)$, $w$ belongs to $\L_{n+k}(\SS)=\L_{n}(\SS)$. If $k=n$, then $s$ and $t$ can be represented as $s=s_0 + r_s c$ and $t=t_0 + r_t c$ respectively, where $s_0,t_0 \in \L_{n-1}(\SS)$, $r_s, r_t \in \R$ and $c$ is an element of $ \L_{n}(\SS) \setminus\L_{n-1}(\SS)$. This representation is correct since $\dim \L_{n-1}(\SS)+1 = \dim \L_{n}(\SS)$, and, hence, $c$ with the basis of $\L_{n-1}(\SS)$ composes the basis of $\L_{n}(\SS)$.
	
	It follows that  $$w=s   t=(s_0 + r_s c)(t_0 + r_t c)=s_0t_0 +(r_s c) t_0+ s_0 (r_t c) + (r_s c) (r_t c) $$
	
	The coefficients  $r_t$ and $r_s$ are real numbers, which allows us to omit the brackets and rearrange the order of multiplication. Thus, we get	
	$$w=s_0t_0+r_s c t_0+ r_t s_0 c + r_s r_t c^2 .$$
	
	Let us consider each term separately. 
	\begin{enumerate}
		\item[] $s_0t_0 \in \L_{2n-2}(\SS)$ as a product of two elements of $\L_{n-1}(\SS)$. 
		\item $r_s c t_0$ and $r_t s_0 c$ are elements of $\L_{2n-1}(\SS)$ as products of a real number, an element of $\L_{n-1}(\SS)$ and an element of $\L_{n}(\SS)$.
		\item  $r_s r_t c^2$ is an element of $\L_{n}(\SS)$, since  $c^2 = u + vc$, where $u, v \in \R$, as $\A$ is locally-complex.
	\end{enumerate}

	Since both $\L_{2n-2}(\SS)$ and $\L_{2n-1}(\SS)$ are equal to $\L_{n}(\SS)$ by the conditions,  it follows that $w$ is an element of $\L_{n}(\SS)$. This implies $\L_{2n}(\SS)=\L_{n}(\SS)$.
\end{proof}

Next example shows that to improve  Proposition \ref{pr_1} both additional conditions, that  $\A$ is locally-complex and condition $(i)$, are necessary.

\begin{example} Let $\A $ be generated by $
	1, e_1, e_2, \ldots, e_6$ with the multiplication given by
	\[e_1e_2=e_4=-e_2e_1,\ e_1e_3=e_5=-e_3e_1,\ e_4e_5=e_6=-e_5e_4, \ e_i^2=-1, \ i=1,\ldots,6,\]
	and all other products are zero. Then for $\SS=\{e_1,e_2,e_3\}$ we have that $\dim \L_1(\SS)=4$, $\dim \L_2(\SS)=6=\dim L_3(\SS)$, but $\dim \L_4(\SS)=7$.
	
	For $n > 2$ we consider the   algebra $\A$  generated by
	$1, e_1, e_2, \ldots, e_{n+3}$ with the multiplication rules
	$e_1e_2=e_3=-e_2e_1,\ e_1e_3=e_4=-e_3e_1,\ \ldots, $ $$ e_1e_{n-1}=e_n=-e_{n-1}e_1,\ e_1 e_n=e_{n+1}=-e_n e_1,\ 
	e_2 e_n=e_{n+2}=-e_n e_2,$$ $  e_{n+1}e_{n+2}=e_{n+3}=-e_{n+2}e_{n+1},\  e_i^2=-1,\ i=1,\ldots,n+3,$
	and all other products are zero. Then for $\SS=\{e_1,e_2\}$ we have $\dim \L_{n-1}(\SS)=n+1$, $\dim \L_{n}(\SS)=\ldots=\dim \L_{2n-1}(\SS)=n+3$, but $\dim \L_{2n}(\SS)=n+4$. So, comparing with Proposition \ref{pr_2} we see that $(i)$ is not satisfied, $(ii)$ is satisfied, and the result does not hold. It is straightforward to see that the introduced algebra $\A$ is locally-complex.
	
Example \ref{ex2} shows that conditions $(i)$ and $(ii)$ are not sufficient if $\A$ is not locally complex.	
\end{example}

\begin{corollary}\label{cor_1}
	Let $\SS$ be a finite generating set of a locally-complex algebra $\A$ and  $n \geq 2$ be  an integer. Assume that  $\dim \L_{n-1}(\SS) < \dim \L_{n}(\SS)$ and  $\dim \L_{n-1}(\SS) \leq \dim \A -2$. Then $\dim \L_{n-1}(\SS) \leq \dim \L_{2n-1}(\SS) -2$.
\end{corollary}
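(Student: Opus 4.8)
The plan is to derive the inequality by contradiction, invoking Proposition~\ref{pr_2} as the engine. The heuristic is that Proposition~\ref{pr_2} describes precisely the situation in which the dimension stabilizes too early; the two hypotheses of the corollary are exactly what is needed to rule this out while still reaching all of $\A$.

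First I would record the trivial monotonicity facts: since $\L_m(\SS) \subseteq \L_{m'}(\SS)$ for $m \le m'$, the sequence $\dim \L_m(\SS)$ is non-decreasing, and in particular $\dim \L_n(\SS) \le \dim \L_{2n-1}(\SS)$ because $n \le 2n-1$ for $n \ge 1$. The hypothesis $\dim \L_{n-1}(\SS) < \dim \L_n(\SS)$ gives $\dim \L_n(\SS) \ge \dim \L_{n-1}(\SS) + 1$. Next I would assume for contradiction that the conclusion fails, so that $\dim \L_{2n-1}(\SS) \le \dim \L_{n-1}(\SS) + 1$. Combining this with monotonicity and the strict increase at index $n$,
\[
\dim \L_{n-1}(\SS) + 1 \le \dim \L_n(\SS) \le \dim \L_{2n-1}(\SS) \le \dim \L_{n-1}(\SS) + 1,
\]
so all these quantities coincide and equal $\dim \L_{n-1}(\SS) + 1$. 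Because $\dim \L_n(\SS)$ and $\dim \L_{2n-1}(\SS)$ agree while the sequence is non-decreasing, every intermediate term $\dim \L_n(\SS), \ldots, \dim \L_{2n-1}(\SS)$ is forced to equal this common value. This yields simultaneously condition $(i)$ of Proposition~\ref{pr_2}, namely $\dim \L_{n-1}(\SS) + 1 = \dim \L_n(\SS)$, and condition $(ii)$, the constancy of $\dim \L_n(\SS) = \cdots = \dim \L_{2n-1}(\SS)$.

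Applying Proposition~\ref{pr_2} then gives $\dim \L_{n+t}(\SS) = \dim \L_n(\SS)$ for all $t \in \NN$, whence $\dim \L(\SS) = \dim \L_n(\SS) = \dim \L_{n-1}(\SS) + 1$. The second hypothesis $\dim \L_{n-1}(\SS) \le \dim \A - 2$ now forces $\dim \L(\SS) \le \dim \A - 1 < \dim \A$, contradicting the fact that $\SS$ generates $\A$, which by Remark~\ref{rem} means $\L(\SS) = \A$. This contradiction completes the argument.

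The only genuinely delicate point is the squeezing step: one must notice that the assumed failure of the conclusion combines with the strict increase at index $n$ to pin every dimension across the range $n, n+1, \ldots, 2n-1$ at exactly $\dim \L_{n-1}(\SS) + 1$, which is precisely the configuration $(i)$–$(ii)$ that Proposition~\ref{pr_2} forbids from persisting all the way up to $\A$. Beyond this bookkeeping no computation is needed, so I expect no substantial obstacle.
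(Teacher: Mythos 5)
Your proof is correct and follows essentially the same route as the paper: negate the conclusion, observe that together with the hypothesis $\dim \L_{n-1}(\SS) < \dim \L_{n}(\SS)$ this pins $\dim\L_n(\SS)=\dots=\dim\L_{2n-1}(\SS)=\dim\L_{n-1}(\SS)+1$, apply Proposition~\ref{pr_2}, and contradict $\L(\SS)=\A$ via $\dim \L_{n-1}(\SS) \le \dim\A-2$. The paper organizes this as a two-case split on whether $\dim\L_{n-1}(\SS)$ equals $\dim\L_n(\SS)-1$; your squeeze is a slightly cleaner packaging of the same argument.
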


\begin{proof}
	Assume the opposite that $\dim \L_{n-1}(\SS) > \dim \L_{2n-1}(\SS) -2$. Then	there are the following  possibilities for $\dim \L_{n-1}(\SS)$:

1. $\dim \L_{n-1}(\SS)< \dim \L_{n}(\SS)-1$, or, in other words, $\dim \L_{n-1}(\SS) \leq \dim \L_{n}(\SS)-2$. Since $\dim \L_{n}(\SS) \leq \dim \L_{2n-1}(\SS)$, we get $\dim \L_{n-1}(\SS) \leq \dim \L_{2n-1}(\SS) -2$.
		
2. $\dim \L_{n-1}(\SS)= \dim \L_{n}(\SS)-1$. By Proposition \ref{pr_2}, if $\dim \L_{2n+1}(\SS) = \dim \L_{n}(\SS)$, then $\L(\SS)=\L_{n}(\SS)$. On the other hand, $$\dim \L_{n}(\SS)=\dim \L_{n-1}(\SS)+1 \leq \dim \A-2+1<\dim \A.$$ However, this contradicts to Remark \ref{rem}, namely, $\L(\SS)=\A$. Thus our assumption is wrong and $\dim \L_{2n+1}(\SS) \geq \dim \L_{n}(\SS)+1 = \dim \L_{n-1}(\SS)+2$.
\end{proof}

\begin{proposition}\label{pr_3}
	Let $\A$ be a locally-complex algebra of the dimension $\dim \A = n$, $n>2$. Assume $\SS$ is a generating set for $\A$ and $(m_0, m_1,\ldots, m_{n-1})$ is  the characteristic sequence of $\SS$. Then  for each $h$
	satisfying $m_h \geq 2$ it holds that there are indices $0<t_1 < t_2 < h$ such that $m_h=m_{t_1}+m_{t_2}$.
\end{proposition}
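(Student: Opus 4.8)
The plan is to sharpen the proof of Proposition~\ref{pr_3_0}, whose only deficiency is that it permits $t_1 = t_2$; concretely I must rule out the situation in which the fresh word realizing $m_h$ decomposes into two fresh factors of \emph{equal} length that span the same one-dimensional increment. By Lemma~\ref{fr_char} Item~2 the term $m_h$ is realized by a fresh word $w_{m_h}$ of length $m_h \ge 2$, and by Lemma~\ref{lem_1} I may write $w_{m_h} = s \cdot t$ with $s,t$ fresh of positive lengths $k_1,k_2$ and $k_1 + k_2 = m_h$. If $k_1 \ne k_2$, then Lemma~\ref{fr_char} Item~3 places both values in the characteristic sequence; since the sequence is non-decreasing and $k_1, k_2 < m_h$, any positions carrying these two distinct values precede $h$ and are themselves distinct, so ordering them yields $0 < t_1 < t_2 < h$ with $m_{t_1} + m_{t_2} = m_h$.

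It remains to treat $k_1 = k_2 =: k$, hence $m_h = 2k$. Here I would prove that the length $k$ occupies at least two positions of the characteristic sequence, equivalently (by Lemma~\ref{fr_char} Item~1) that $\dim \L_k(\SS) - \dim \L_{k-1}(\SS) \ge 2$; as $k < 2k = m_h$ these two positions lie below $h$ and serve as the desired $t_1 < t_2$. To establish the inequality I argue by contradiction, assuming $\dim \L_k(\SS) - \dim \L_{k-1}(\SS) = 1$ and choosing $c \in \L_k(\SS) \setminus \L_{k-1}(\SS)$. Then both fresh factors can be written $s = s_0 + r_s c$ and $t = t_0 + r_t c$ with $s_0, t_0 \in \L_{k-1}(\SS)$ and $r_s, r_t \in \R$, exactly as in the proof of Proposition~\ref{pr_2}.

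Expanding by bilinearity and pulling out scalars gives $w_{m_h} = s t = s_0 t_0 + r_t (s_0 c) + r_s (c t_0) + r_s r_t\, c^2$. I would then bound the summands: $s_0 t_0 \in \L_{2k-2}(\SS)$, the two mixed products lie in $\L_{2k-1}(\SS)$, and the diagonal term satisfies $c^2 \in \L_k(\SS)$ because local complexity forces $c^2 = u + v c$ with $u, v \in \R$. Since $\L_{2k-2}(\SS)$ and $\L_k(\SS)$ are both contained in $\L_{2k-1}(\SS)$ (using $k \ge 1$), this exhibits $w_{m_h} \in \L_{2k-1}(\SS)$, contradicting freshness of $w_{m_h}$ at length $2k$, and the contradiction forces the multiplicity to be at least two.

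The step I expect to be the crux, and the sole point at which the locally-complex hypothesis enters, is the control of the diagonal term $c^2$: it is precisely the quadratic relation $c^2 \in \R \cdot 1 + \R c$ that collapses $c^2$ into $\L_k(\SS)$ and thereby forbids a fresh word from arising as a product of two factors that are proportional modulo $\L_{k-1}(\SS)$. This is the same mechanism underlying Proposition~\ref{pr_2}, so I anticipate reusing that computation almost verbatim; the only remaining bookkeeping is verifying the index inequalities $0 < t_1 < t_2 < h$ from the monotonicity of the characteristic sequence.
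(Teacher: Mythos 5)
Your proof is correct, and its skeleton matches the paper's: realize $m_h$ by a fresh word, decompose it via Lemma~\ref{lem_1}, split on whether the two factor lengths coincide, and invoke local complexity only in the equal-length case. The difference is in how the case $k_1=k_2=k$ is disposed of. The paper argues directly with the two fresh factors: assuming $1,w_{k_1},w_{k_2}$ are linearly dependent over $\R$, it substitutes $w_{k_2}=r_0'+r_1'w_{k_1}$ and uses the quadratic relation $w_{k_1}^2\in\langle 1,w_{k_1}\rangle$ to place $w_{m_h}$ in $\langle 1,w_{k_1}\rangle$, contradicting freshness, and then concludes that the value $k$ occurs twice in the sequence. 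You instead assume the increment $\dim \L_k(\SS)-\dim \L_{k-1}(\SS)$ equals $1$ and rerun the expansion from the proof of Proposition~\ref{pr_2} with $s=s_0+r_sc$, $t=t_0+r_tc$, $s_0,t_0\in \L_{k-1}(\SS)$, landing $w_{m_h}$ in $\L_{2k-1}(\SS)$. Your version is the more robust of the two: what the conclusion actually needs is that $k$ has multiplicity at least two in the characteristic sequence, i.e.\ two fresh words of length $k$ independent modulo $\L_{k-1}(\SS)$, and working modulo $\L_{k-1}(\SS)$ rather than modulo $\R\cdot 1$ also covers the configuration $w_{k_2}=w_{k_1}+u$ with $u\in \L_{k-1}(\SS)\setminus\R$, which the paper's dichotomy does not explicitly address. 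The price is the slightly heavier bookkeeping imported from Proposition~\ref{pr_2}; the paper's computation is shorter because it only ever manipulates the two factors and the element $1$.
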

\begin{proof}
	By Lemma \ref{fr_char} Item 1 each term $m_h$ of the characteristic sequence corresponds to a fresh word of the length $m_h$, denote it by $w_{m_h}$. By Lemma \ref{lem_1}, each fresh word of the length $m_h \ge 2$ can be represented as a product of two fresh words, possibly equal, of lesser lengths. Thus, $w_{m_h}=w_{k_1}\cdot w_{k_2}$ for some  fresh words $w_{k_1},  w_{k_2}$ of the  lengths $k_1 , k_2$, correspondingly.  We consider three cases separately. 
	
	1. Assume $k_1 < k_2$. Then by  Lemma \ref{fr_char} Item 3 there are indices $0<t_1 < t_2 < h$ such that $m_{t_1}=k_1$ and $m_{t_2}=k_2$. 
	
	2. Assume $k_1 > k_2$. Then by Lemma \ref{fr_char} Item 3 there are indices $0<t_1 < t_2<h$ such that $m_{t_1} = k_2$ and $m_{t_2}=k_1$. 
	
	3. Assume $k_1 = k_2$. If $w_{k_1}$ and $w_{k_2}$ are not linearly independent modulo $\R$, then there exist real numbers $r_0,r_1,r_2$ such that $r_0 +r_1 w_{k_1} +r_2 w_{k_2}=0$ and $r_0^2 +r_1^2 +r_2^2 \neq0$. Obviously, at least one of $r_1$ and $r_2$ is non-zero. Let us assume that it is $r_2$, the second case is similar. It follows that $w_{k_2} = r'_0 +r'_1 w_{k_1}$, where $r'_0 = -r_0/r_2,r'_1=-r_1/r_2$. Hence, \[w_{m_h}=w_{k_1} w_{k_2}=w_{k_1}   (r'_0 +r'_1 w_{k_1}) =r'_0 w_{k_1}  +r'_1 w_{k_1}^2. \] Note that since $\A$ is locally-complex, $w_{k_1}^2 \in \langle 1, w_{k_1} \rangle $, thus $w_{m_h} \in \langle 1, w_{k_1} \rangle$. However, this contradicts to the fact that $w_{m_h}$ is fresh. Thus, by Lemma \ref{fr_char} Items 1 and 3, there are at least two distinct indices $t_1$ and $t_2$ such that  $m_{t_1}=m_{t_2}=k_1=k_2$. 
	
	In all cases, the additivity of word length concludes the proof.
\end{proof}

\begin{remark} Observe that we proved that characteristic sequences of locally complex algebras belong to the class of additive chain without doubling, see \cite{Knut}, i.e., each term is a sum of different previous terms.
	\end{remark}
  
This small difference with Proposition \ref{pr_3_0} allows us to improve the upper bound on the length function established in Theorem \ref{th_0} for general algebras in the case of locally-complex algebras.

\section {Upper bound for the lengths of locally-complex algebras}

\begin{definition}
	Let ${\mathcal F}_n=(F_1,\ldots,F_n,\ldots)$ denote the Fibonacci sequence, i.e. the sequence of positive integers  satisfying the recurrent relations $F_1=F_2=1$, $F_{i}=F_{i-1}+F_{i-2}$ for all $i\ge 3$.
\end{definition}

\begin{theorem}\label{th_1}
Let $\A$ be a locally-complex algebra of the dimension $\dim \A = n$, $n>2$. Assume $\SS$ is
a generating set of $\A$, and $(m_0, m_1,\ldots, m_{n-1})$ is the characteristic sequence of $\SS$. Then for each positive integer $h \leq n-1$ it holds  that $m_h \leq F_h$.
\end{theorem}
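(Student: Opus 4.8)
The plan is to mirror the induction used in Theorem~\ref{th_0}, but to exploit the stronger additive structure furnished by Proposition~\ref{pr_3} for locally-complex algebras. Recall that for a general algebra Proposition~\ref{pr_3_0} gave $m_h = m_{t_1}+m_{t_2}$ with $0<t_1\le t_2<h$, allowing $t_1=t_2$; this is exactly why the doubling bound $2^{h-1}$ arose. For locally-complex algebras Proposition~\ref{pr_3} upgrades this to $0<t_1<t_2<h$, i.e.\ the two indices are \emph{strictly} distinct. Since the characteristic sequence is monotonically non-decreasing, strict inequality of indices forces $t_2\le h-1$ and $t_1\le h-2$, and this is precisely the separation that produces the Fibonacci recursion rather than doubling.

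First I would set up induction on $h$, proving $m_h\le F_h$ for all $1\le h\le n-1$ simultaneously (a strong/complete induction, since I will need the hypothesis at two different earlier indices). For the base cases I would check $h=1$ and $h=2$ directly: by construction $m_1=1=F_1$, and $m_2$ is either $1$ or $2$, so $m_2\le 2=F_2+F_1=F_3$---here I must be careful about which Fibonacci index the bound should land on, so I would pin down the base cases against the stated convention $F_1=F_2=1$, $F_3=2$, etc. The key step is the inductive one: given $m_h\ge 2$, apply Proposition~\ref{pr_3} to write $m_h=m_{t_1}+m_{t_2}$ with $0<t_1<t_2<h$. Using monotonicity of the sequence together with $t_2\le h-1$ and $t_1\le h-2$, and the induction hypothesis at indices $t_1,t_2$, I obtain
\[
m_h = m_{t_1}+m_{t_2}\le F_{t_1}+F_{t_2}\le F_{h-2}+F_{h-1}=F_h,
\]
where the middle inequality uses that the Fibonacci sequence is increasing, so $F_{t_1}\le F_{h-2}$ and $F_{t_2}\le F_{h-1}$. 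The final equality is just the Fibonacci recurrence.

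The main obstacle I anticipate is not the algebra but the bookkeeping at the boundary: I must verify that the strict inequalities $t_1<t_2<h$ genuinely guarantee $t_2\le h-1$ and $t_1\le h-2$ (rather than merely $t_1\le h-1$), since it is exactly this gap of one extra index that separates Fibonacci growth from exponential growth. Because the Fibonacci sequence is monotone increasing only for indices $\ge 1$, I would also confirm that $t_1\ge 1$ keeps us in the range where $F_{t_1}\le F_{h-2}$ is valid, and treat the smallest values of $h$ (where $h-2$ could be $0$ or $1$) as explicit base cases so the recurrence step only ever invokes the hypothesis at legitimate positive indices. Once those boundary indices are handled cleanly, the estimate above closes the induction and yields $m_h\le F_h$ for every $h\le n-1$.
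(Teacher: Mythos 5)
Your argument is the same as the paper's: induct on $h$, invoke Proposition~\ref{pr_3} to write $m_h=m_{t_1}+m_{t_2}$ with \emph{strictly} distinct indices $0<t_1<t_2<h$, use monotonicity to get $t_1\le h-2$, $t_2\le h-1$, and close with the Fibonacci recurrence. The inductive step is correct and is exactly the paper's computation $m_{t_1}+m_{t_2}\le F_{t_1}+F_{t_2}\le F_{h-2}+F_{h-1}=F_h$.

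The one point you have left unresolved is the base case $h=2$, and as written it is wrong: you conclude $m_2\le 2=F_3$, but the theorem asserts $m_2\le F_2=1$. This is not a cosmetic indexing issue, because if $m_2$ could equal $2$ the induction would already fail at $h=3$ (you would get $m_3\le m_1+m_2\le 3$, whereas $F_3=2$). The resolution is that $m_2=1$ always holds here, and you can extract it from the very tool you are using: if $m_2\ge 2$, Proposition~\ref{pr_3} would supply integers $0<t_1<t_2<2$, which do not exist. (Equivalently and more concretely: $m_2\ge 2$ would mean $\dim\L_1(\SS)=2$, so $\L_1(\SS)=\langle 1,a\rangle$ for a single $a\notin\R$; since $\A$ is locally complex, $a^2\in\langle 1,a\rangle$, so $\L_1(\SS)$ is closed under multiplication and $\L(\SS)=\L_1(\SS)\ne\A$, contradicting that $\SS$ generates $\A$ with $n>2$.) With $m_1=m_2=1=F_1=F_2$ pinned down, your induction goes through verbatim.
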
	
\begin{proof}
	We prove this statement using the induction  on~$h$. 
	
	The base. If $h=1$,  then $m_1=1\leq F_1$. 
	
	The step. Assume  that for all positive integers $h=1,\ldots,k$ the statement holds. We have to prove it now for $h=k+1\le n-1$. By Proposition \ref{pr_3} $m_{k+1}=m_{t_1}+m_{t_2}$, where $0<t_1 < t_2 < k+1$. According to the induction hypothesis, $$m_{t_1}+m_{t_2} \leq F_{t_1}+F_{t_2} \leq F_{k-1} + F_k= F_{k+1},$$ which concludes the proof.
\end{proof}

Now we can  prove our main result.

\begin{theorem}\label{key_th}
	Let $\A$ be a locally-complex algebra of the dimension $\dim \A = n >2$. Then the length of $\A$ is less than or equal to the $(n-1)$-th Fibonacci number $F_{n-1}$.
\end{theorem}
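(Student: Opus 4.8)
The plan is to reduce Theorem~\ref{key_th} to Theorem~\ref{th_1} in essentially one step, mirroring how Proposition~\ref{gen_th} was deduced from Theorem~\ref{th_0}. First I would fix an arbitrary generating set $\SS$ of the locally-complex algebra $\A$ and form its characteristic sequence $(m_0,m_1,\ldots,m_{n-1})$. By Lemma~\ref{N=n-1}, since $\dim\A=n>2$, the sequence has exactly $n$ terms (so $N=n-1$) and the last term satisfies $m_{n-1}=l(\SS)$. This identification of the length with the final entry of the characteristic sequence is the only structural input I need.

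Next I would invoke Theorem~\ref{th_1}, which bounds each entry of the characteristic sequence of a locally-complex algebra by the corresponding Fibonacci number: for each positive integer $h\le n-1$ one has $m_h\le F_h$. Applying this at the top index $h=n-1\ge 1$ (which is legitimate because $n>2$ guarantees $n-1\ge 1$, and in fact $n-1\ge 2$) gives immediately
\[
l(\SS)=m_{n-1}\le F_{n-1}.
\]
Since $\SS$ was an arbitrary generating set, taking the maximum over all generating sets and using Definition~\ref{alg_len} yields $l(\A)=\max_\SS l(\SS)\le F_{n-1}$, which is exactly the claimed bound.

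I do not expect any genuine obstacle here: the real work has already been done in Theorem~\ref{th_1}, whose proof relies on Proposition~\ref{pr_3} (the ``additive chain without doubling'' property specific to locally-complex algebras, forcing $m_h=m_{t_1}+m_{t_2}$ with \emph{strictly} distinct indices $t_1<t_2<h$). The only point that warrants a sentence of care is confirming that the hypotheses of Lemma~\ref{N=n-1} and Theorem~\ref{th_1} are met, namely that $\A$ is locally-complex with $\dim\A=n>2$ and that $\SS$ is a generating set, so that the characteristic sequence is well-defined and has its last index equal to $n-1$. Everything then collapses to substituting $h=n-1$ into the per-term Fibonacci bound, so the proof is a short corollary-style argument rather than a new induction.
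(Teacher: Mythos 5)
Your proposal is correct and follows exactly the paper's own argument: fix an arbitrary generating set $\SS$, use Lemma~\ref{N=n-1} to identify $l(\SS)$ with the last term $m_{n-1}$ of the characteristic sequence, apply Theorem~\ref{th_1} at $h=n-1$ to get $m_{n-1}\le F_{n-1}$, and conclude by taking the maximum over all generating sets. No gaps; the hypothesis checks you mention are the only points of care and they are satisfied.
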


\begin{proof}
	Let $\SS$ be an  arbitrary generating set of $\A$, and  $(m_0, m_1,\ldots, m_{n-1})$ be its characteristic sequence. By Lemma \ref{N=n-1} and Theorem \ref{th_1}, $l(\SS)=m_{n-1} \leq F_{n-1}$. Hence, length of $\A$ is less than or equal to $F_{n-1}$.
\end{proof}

\begin{proposition}
	If $\A$ is a locally-complex algebra of the dimension $\dim \A = n$ and $\SS$ is its generating set, containing $k$ linearly independent modulo $\R$ elements, then
	$l(\SS) \leq F_{n-k+1}.$
\end{proposition}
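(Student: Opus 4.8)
The plan is to strengthen the induction underlying Theorem \ref{th_1} by exploiting the extra information that $\SS$ carries $k$ elements that are linearly independent modulo $\R$. First I would translate this hypothesis into a statement about the characteristic sequence $(m_0,\ldots,m_{n-1})$ of $\SS$. Since $\L_1(\SS)$ contains $1$ together with the $k$ given elements, we have $\dim \L_1(\SS)\ge k+1$, so $s_1=\dim\L_1(\SS)-1\ge k$; by Definition \ref{CharSeq} this forces $m_1=\cdots=m_k=1$. By Lemma \ref{N=n-1} the quantity to bound is $l(\SS)=m_{n-1}$, so it suffices to control the tail of the sequence, and the whole difference with the proof of Theorem \ref{th_1} is that the leading block of $1$'s is now longer.

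The heart of the argument is the sharpened claim that $m_h\le F_{h-k+2}$ for every $h$ with $k\le h\le n-1$, which I would prove by induction on $h$. For the base $h=k$ we have just observed $m_k=1=F_2$. For the step, fix $h\ge k+1$; if $m_h=1$ the inequality is immediate since $F_{h-k+2}\ge F_2=1$, so assume $m_h\ge 2$. Then Proposition \ref{pr_3} (this is exactly where local-complexity enters, guaranteeing \emph{distinct} summand indices rather than the merely weakly ordered ones of Proposition \ref{pr_3_0}) yields $0<t_1<t_2<h$ with $m_h=m_{t_1}+m_{t_2}$. I would bound the two summands separately against $F_{h-k+1}$ and $F_{h-k}$: when an index is $\ge k$ I apply the induction hypothesis (using $t_2\le h-1$ and $t_1\le t_2-1\le h-2$ together with monotonicity of $F$ to get $m_{t_2}\le F_{h-k+1}$ and $m_{t_1}\le F_{h-k}$), and when an index is $<k$ I use the already-established value $1$. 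Adding gives $m_h\le F_{h-k+1}+F_{h-k}=F_{h-k+2}$, and taking $h=n-1$ yields $l(\SS)=m_{n-1}\le F_{n-k+1}$.

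The main obstacle is the bookkeeping of the Fibonacci indices in the step, specifically the two cases in which a summand index drops below $k$ and contributes a value $1$: there one must verify $F_2\le F_{h-k+1}$ and $F_1\le F_{h-k}$, i.e. $h-k+1\ge 2$ and $h-k\ge 1$, which is precisely why the step is run only for $h\ge k+1$ while the base is placed at $h=k$. As sanity checks I would note that the bound behaves correctly at the extremes: for the smallest admissible value $k=2$ (any generating set of a locally-complex algebra of dimension $>2$ must contain at least two elements independent modulo $\R$) the claim specializes to $m_h\le F_h$ and hence recovers Theorem \ref{key_th}, whereas for $k=n-1$ it gives $F_2=1=l(\SS)$, consistent with $\dim\L_1(\SS)=\dim\A$.
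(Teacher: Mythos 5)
Your proposal is correct and follows essentially the same route as the paper: isolate the block $m_1=\cdots=m_k=1$ forced by the $k$ independent elements, then run the Fibonacci induction of Theorem \ref{th_1} with the indices shifted by $k$, using Proposition \ref{pr_3} for the decomposition $m_h=m_{t_1}+m_{t_2}$ with $t_1<t_2$. Your explicit treatment of the case where a summand index falls below $k$ (contributing the value $1$) is a slightly more careful piece of bookkeeping than the paper's, which folds it into the single chain of inequalities, but the argument is the same.
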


\begin{proof}
	Let $(m_0,\ldots,m_{n-1})$ be the characteristic sequence of $\SS$. It should be noted that
	$m_1 = \ldots = m_k = 1$, since $\dim \L_1(\SS) - \dim \L_0(\SS) = k$. We   use  the induction to prove  that  $m_{k+h} \leq F_{h+2}$ for  all integer $h, \ -1 \le h \leq n-k-1$.
	
	The base. For $h=-1$ and $h=0$ one has  $m_{k-1}=m_k=1=F_1=F_2$.
	
	The step. Let us assume that for $h =-1,0,\ldots,d $ the statement holds. We have to prove now that it holds for $h=d+1 \le n-k-1$. By Proposition \ref{pr_3} we have $m_{k+d+1}=m_{t_1}+m_{t_2}$, where $0<t_1 < t_2 < k+d+1$. According to the induction hypothesis, $$m_{t_1}+m_{t_2} \leq F_{t_1-k+2}+F_{t_2-k+2} \leq F_{d+1} + F_{d+2}= F_{d+3},$$ which concludes the proof.
\end{proof}

The example below demonstrates that the obtained bound is  sharp in the class of locally-complex algebras.  

\begin{example}
	Let us consider locally-complex algebra $\A$ over real numbers with basis \{$e_0=1_{\R},..e_{n-1}$\} ($n>2$) and following multiplication rule: for every $k$, such that $1 \leq k \leq n-3$
	\[e_k e_{k+1}=e_{k+2}\]
	\[e_{k+1} e_k= - e_{k+2},\]
	for every $m$, such that $1 \leq m \leq n-1$
	\[e_m e_m = -1,\]
	and for other combinations of $p,q$: $1 \leq p,q \leq n-1$
	\[e_p e_q = 0.\]
	
The set $\SS$=\{$e_1,e_2$\} generates $\A$, and its characteristic sequence is exactly $(0,1,1,2, \ldots, F_{n-1})$, since every fresh word is obtained as a product of two previous fresh words. We get $F_{n-1}= l(\SS) \le l(\A) \le F_{n-1}$, which means $ l(\A) = F_{n-1}$.
\end{example}

\end{document}